\documentclass [10pt,a4paper]{amsart}
\usepackage {hannah}
\usepackage {graphicx,psfrag,enumitem}
\usepackage[usenames,svgnames]{xcolor}
\usepackage{texdraw}
\usepackage [latin1]{inputenc}

\usepackage {amsmath,amssymb}
\usepackage{amsthm}
\usepackage[encapsulated]{CJK}
\usepackage{hyperref}
\usepackage{amscd}
\usepackage[shortalphabetic,nobysame]{amsrefs}

\hypersetup{
%  breaklinks,
  colorlinks,
%  citecolor=Green,
%  linkcolor=BlueViolet,
%  urlcolor=DarkBlue,
  pdftitle={The combinatorics of real double Hurwitz numbers with real positive branch points},
  pdfauthor={Mathieu Guay-Paquet, Hannah Markwig and Johannes Rau},
}

\usepackage{todonotes}

\graphicspath{{figs/}}

\parindent 0mm
\parskip 1ex plus 0.3ex

\DeclareMathOperator {\uc}{uc}

\DeclareMathOperator {\val}{val}

\DeclareMathOperator {\trop}{trop}

\DeclareMathOperator {\pari}{par}

\newcommand {\dunion}{\,\mbox {\raisebox{0.25ex}{$\cdot$} \kern-1.83ex $\cup$}
  \,}

\title[The combinatorics of real double Hurwitz numbers] {The combinatorics of real double Hurwitz numbers with real positive branch points}
\author{Mathieu Guay-Paquet, Hannah Markwig and Johannes Rau}
\address{Universit\"at des Saarlandes, Fachrichtung
  Mathematik, Postfach 151150, 66041 Saar\-br\"ucken, Germany }
\email{hannah@math.uni-sb.de, johannes.rau@math.uni-sb.de}
\address{%
LaCIM\\
Universit\'e du Qu\'ebec \`a Montr\'eal\\
201 Av du Pr\'esident-Kennedy\\
Montr\'eal QC\ \ H2X~3Y7\\
Canada}
\email{mathieu.guaypaquet@lacim.ca}

\begin{document}

\begin{abstract}
We investigate the combinatorics of real double Hurwitz numbers with real positive branch points using the symmetric group. Our main focus is twofold. First, we prove correspondence theorems relating these numbers to counts of tropical real covers and study the structure of real double Hurwitz numbers with the help of the tropical count. Second, we express the numbers as counts of paths in a subgraph of the Cayley graph of the symmetric group. By restricting to real double Hurwitz numbers with real positive branch points, we obtain a concise translation of the counting problem in terms of tuples of elements of the symmetric group that enables us to uncover the beautiful combinatorics of these numbers both in tropical geometry and in the Cayley graph.
\end{abstract}

 \maketitle

\section{Introduction}

\label{sec-intro}
In this paper, we study the combinatorics of real double Hurwitz numbers with only real and positive branch points.

(Complex) Hurwitz numbers count genus $g$ degree $d$ covers of a curve of genus $h$ with fixed ramification conditions at fixed points of the target. The \emph{ramification profile} of a point in the target, a partition of the degree $d$, encodes how many sheets of the map come together above this point.
We call a point a \emph{branch point} if its ramification profile is not $(1^d)$. If the profile is $(2,1^{d-2})$ we say that the branch point (resp.\ the ramification) is \emph{simple}. The Riemann-Hurwitz formula implies how many branch points we need to fix in order to obtain a finite count.
Such counts of covers date back to Hurwitz himself in the 19th century and have since then provided a fertile source for interesting problems connecting the geometry of covers, the moduli space of curves, the representation theory of the symmetric group and matrix models in probability theory. % Provide references?

Double Hurwitz numbers are counts of covers of $\P^1$, where we fix two special ramification profiles $\mu$ and $\nu$ about $0$ and $\infty$ and only simple ramification elsewhere.
Double Hurwitz numbers feature a particularly rich structure investigated e.g.\ in \cites{gjv:ttgodhn, ssv:cbodhn, cjm:wcfdhn, Joh10}.
Tropical analogues of double Hurwitz numbers were introduced in \cite{CJM10} and have been a successful tool in obtaining structural results \cite{cjm:wcfdhn}.

In this paper, we study real covers of the projective line. In general, real algebraic geometry is much harder than complex algebraic geometry, that is, algebraic geometry over an algebraically closed field. This holds true also for real counts of covers. For example, counts of real covers may depend on the exact position of the branch points.
We restrict our attention to real double Hurwitz numbers with only real and positive branch points. For this situation, the count does not depend on the exact position of the chosen branch points. There is an ambiguity in the definition of such Hurwitz numbers: we can either count them with their real structure (we call these numbers $\tilde{H}_g(\mu,\nu)$) or without ($H_g(\mu,\nu)$).

By matching a cover with a monodromy representation, the count of a Hurwitz number is equivalent to choices of $n$-tuples of elements of $\mathbb{S}_d$ of fixed conjugacy class satisfying some conditions.
This holds also true for the real double Hurwitz numbers in question. We exploit the symmetric group approach to Hurwitz numbers to investigate their combinatorial properties in two directions:
\begin{enumerate}
 \item We study tropical real double Hurwitz numbers.
\item We investigate real double Hurwitz numbers in terms of paths in a subgraph of the Cayley graph of the symmetric group.
\end{enumerate}
We construct tropical real double Hurwitz numbers as weighted counts of graphs mapping to a line (i.e.\ tropical covers) which are colored in a way reflecting the real structure. We obtain correspondence theorems for the numbers $\tilde{H}_g(\mu,\nu)$ and $H_g(\mu,\nu)$. The tropical interpretation of real double Hurwitz numbers uncovers the relation between these numbers: an easy corollary of our correspondence theorems (corollary \ref{cor-tildeh}) implies that $\tilde{H}_g(\mu,\nu)=H_g(\mu,\nu)$ if $\mu$ and $\nu$ are not both in $\{d,(\frac{d}{2},\frac{d}{2})\}$. If $\mu$ and $\nu$ are both in $\{d,(\frac{d}{2},\frac{d}{2})\}$, their difference is also determined in corollary \ref{cor-tildeh}.

The study of correspondence theorems for Hurwitz numbers relating them to their tropical counterparts is not limited to an approach in terms of the symmetric group. For complex Hurwitz numbers, there is a general version using topological methods. General correspondence theorems for real Hurwitz numbers are the topic of a forthcoming paper of the second and third author.

By restricting to real double Hurwitz numbers with real positive branch points, we obtain a concise translation of the counting problem in terms of tuples of elements of the symmetric group, which enables us to express the combinatorics both in terms of tropical covers as well as in terms of paths in the Cayley graph in a useful way.

The paper is organized as follows.
In section \ref{sec-real} we introduce the counting problem.
In section \ref{sec-tropical}, we study tropical real double Hurwitz numbers. We prove correspondence theorems using an approach via the symmetric group.
In section \ref{sec-walls}, we study real double Hurwitz numbers as a map.
In section \ref{sec-cayley}, we translate some of these concepts to the Cayley graph.

\subsection{Acknowledgements}
%Which grant?

The first author was supported by the Natural Sciences and Engineering Research Council of Canada (NSERC). The second author is supported by DFG-grant MA 4797/6-1 and GIF grant no. 1174-197.6/2011.

Part of this work was completed during the second and third author's stay at the Centre Interfacultaire Bernoulli at the \'{E}cole Polytechnique F\'{e}d\'{e}rale de Lausanne during the program ``Tropical geometry in its complex and symplectic aspects'' in 2014. The authors would like to thank the CIB for hospitality, and the CIB and NSF (National Science Foundation) for support.

We would like to thank Erwan Brugall\'{e}, Ilia Itenberg, Grigory Mikhalkin and Roland Speicher for helpful discussions. We thank an anonymous referee for useful comments on an earlier version.

\section{Real double Hurwitz numbers with positive real branch points}\label{sec-real}
We start by defining the relevant counts of real covers and discussing the relation to counts of tuples in the symmetric group satisfying certain properties.
Here, we restrict to only real and positive branch points. Under this requirement, the real Hurwitz numbers we discuss are invariant of the exact location of the branch points.
We introduce two notions of real Hurwitz numbers, either counting covers that allow a real structure, or counting covers together with the real structure.

\begin{definition}[Real double Hurwitz numbers]\label{def-Hurwitz}
Fix two partitions $\mu$ and $\nu$ of an integer $d\geq1$ and a genus $g$.
%We assume that at least one of $\mu$ and $\nu$ contains at least two parts.
For a partition $\mu$, let $\ell(\mu)$ denote the number of parts of $\mu$.
Fix $r=2g-2+\ell(\mu)+\ell(\nu)$ positive points $p_1, \ldots, p_{r}$ on $\mathbb{P}_{\R}^1$, i.e.\ strictly between $0$ and $\infty$.

Let $\text{conj} : \P^1 \to \P^1$ be the involution given by conjugation, with fixed point locus $\mathbb{P}_{\R}^1$.

The \textit{real double Hurwitz number} $H_{g}(\mu,\nu)$ is defined as the weighted number of degree $d$ covers $\pi: C\rightarrow \P^1$ where
\begin{itemize}
\item $C$ is a smooth projective curve of genus $g$ over $\C$;
\item
the cover $\pi: C\rightarrow \P^1$ is real, i.e.\ there is a smooth involution $\varphi:C\rightarrow C$ satisfying $\pi \circ \varphi= \text{conj} \circ \pi$;
%the cover $\pi: C\rightarrow \P^1$ is real, i.e.\ there is an involution $\varphi:C\rightarrow C$ satisfying $\pi \circ \varphi=\pi$;
\item $\pi$ ramifies with profile $\mu$ over $0\in \P^1$ and with profile $\nu$ over $\infty\in \P^1$;
\item $\pi$ is simply ramified at $p_1,\ldots,p_{r}$.
%\item the inverse images of $0$ and $\infty$ are marked.
\end{itemize}
As usual, we count such covers up to isomorphism, and each cover $\pi$ is weighted by $1/|\mbox{Aut}(\pi)|$.
Here, an isomorphism between two covers $\pi$ and $\pi'$ is an isomorphism of the corresponding curves $\alpha : C \to C'$ such that
$\pi = \pi' \circ \alpha$.
%\todo{I think here we need usual automorphisms}
\end{definition}

Note that this number is independent of the locations of the positive $p_i\in \P^1_{\R}$.
This follows e.g.\ from lemma \ref{lem-tuples}.
 It follows from the Riemann-Hurwitz formula (see e.g.\ \cite{Har77}, Corollary IV.2.4) that $r$ is the number of simple ramifications we need to fix.
It also follows that $0$, $\infty$ and $p_1,\ldots,p_{r}$ are all branch points of the covers.
We chose not to put any markings on the ramification points.
For some purposes (cf.\ section \ref{sec-walls}), it is useful to mark the preimages of the special branch points $0$ and $\infty$ in order to make them distinguishable. However, the two definitions just differ by a factor of $|\Aut(\mu)| \cdot |\Aut(\nu)|$.
%We chose to mark the preimages of the special branch points $0$ and $\infty$.
%More precisely, we pick an order of the ramification points refining the order given by the ramification data to make all ramification points distinguishable.
%It is also possible to leave them unmarked; the two definitions then just differ by a factor of $|\Aut(\mu)| \cdot |\Aut(\nu)|$.

In a variant of this definition, we count covers with their real structures:
\begin{definition}[Real double Hurwitz numbers with real structures]
Let $d, \mu, \nu, g, p_1,\ldots, p_r$ be as in definition \ref{def-Hurwitz}. We set $\tilde{H}_{g}(\mu,\nu)$ the number of pairs $(\pi, \varphi)$ where $\pi:C\rightarrow \P^1$ is a cover satisfying the requirements of definition \ref{def-Hurwitz}, and $\varphi$ is a real structure,
i.e.\ a smooth involution $\varphi:C\rightarrow C$ satisfying $\pi \circ \varphi=\text{conj} \circ \pi$.
%i.e.\ an involution $\varphi:C\rightarrow C$ satisfying $\pi \circ \varphi=\pi$.
We count such pairs up to \emph{real} isomorphism, and each pair is weighted by $1/|\mbox{Aut}((\pi,\varphi))|$.
A \emph{real} isomorphism between two pairs $(\pi, \varphi)$ and $(\pi', \varphi')$ is an isomorphism $\alpha$ between $\pi$ and $\pi'$
satisfying also $\alpha \circ \varphi = \varphi' \circ \alpha$.
\end{definition}

By matching a cover with a tuple in the symmetric group encoding the monodromy and the involution, we obtain the following equivalent definition of real Hurwitz numbers \cite{Cad05}:
\begin{lemma}\label{lem-tuples}
 The real double Hurwitz number $\tilde{H}_g(\mu,\nu)$ (with $|\mu|=|\nu|=d$) equals 
$1/d!$
%$\frac{|\Aut(\mu)| \cdot |\Aut(\nu)|}{d!}$
times the number of tuples $(\gamma,\sigma,\tau_1,\ldots,\tau_r)$ of elements of the symmetric group $\mathbb{S}_d$ satisfying
\begin{enumerate}
\item $\sigma$ has cycle type $\mu$;
\item the $\tau_i$ are transpositions;
\item $\tau_r\circ\cdots\circ\tau_1\circ\sigma$ has cycle type $\nu$;
\item the subgroup generated by $\sigma,\tau_1,\ldots,\tau_r$ acts transitively on the set $\{1,\ldots,d\}$;
\item \label{gammacond} $\gamma$ is an involution (i.e.\ $\gamma^2 = \id$) satisfying $$\gamma\circ\sigma\circ\gamma=\sigma^{-1}$$
and
 $$\gamma\circ (\tau_i\circ\cdots\circ\tau_1\circ\sigma)\circ\gamma= (\tau_i\circ\cdots\circ\tau_1\circ\sigma)^{-1}$$ for all $i=1,\ldots,r$.
\end{enumerate}

The real double Hurwitz number $H_g(\mu,\nu)$ equals 
$1/d!$
%$\frac{|\Aut(\mu)| \cdot |\Aut(\nu)|}{d!}$ 
times the number of tuples $(\sigma,\tau_1,\ldots,\tau_r)$ satisfying the requirements above, and that there exists an involution $\gamma$ satisfying the above.
\end{lemma}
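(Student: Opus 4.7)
The plan is to use the classical monodromy correspondence between Hurwitz covers of $\P^1$ and tuples of permutations in $\mathbb{S}_d$, adapted to record the additional datum of a real structure. I would fix a base point $q \in \P^1_\R$ away from the branch points and pick a standard system of generators for $\pi_1(\P^1 \setminus \{0,p_1,\ldots,p_r,\infty\}, q)$ by loops lying, apart from their endpoints, in the upper half-plane: a loop $\alpha_0$ around $0$, loops $\alpha_i$ around each $p_i$, and a loop around $\infty$. Assigning to a cover $\pi \colon C \to \P^1$ its tuple of monodromies with respect to some labeling of the fiber $\pi^{-1}(q)$ by $\{1,\ldots,d\}$ produces a tuple $(\sigma,\tau_1,\ldots,\tau_r) \in \mathbb{S}_d^{r+1}$ in which $\sigma$ has cycle type $\mu$, each $\tau_i$ is a transposition (since the ramification at $p_i$ is simple), the product relation in $\pi_1$ forces $\tau_r\cdots\tau_1\sigma$ to have the cycle type of $\nu$, and transitivity of the generated subgroup is equivalent to connectedness of $C$. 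This yields conditions (1)--(4). The factor $1/d!$ comes from orbit-stabilizer: the set of labelings of $\pi^{-1}(q)$ is an $\mathbb{S}_d$-torsor on which $\Aut(\pi)$ acts freely, so the number of labelings giving a fixed tuple is $d!/|\Aut(\pi)|$.

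To include the real structure, I would associate to $\varphi$ the involution $\gamma \in \mathbb{S}_d$ induced by the restriction of $\varphi$ to the labeled fiber $\pi^{-1}(q)$. The relation $\pi \circ \varphi = \conj \circ \pi$ implies that for every loop $\ell$ based at $q$, the monodromy of $\conj \circ \ell$ equals $\gamma \cdot \text{mon}(\ell) \cdot \gamma$. The key geometric computation is to identify $\conj \circ \ell$ in $\pi_1$ for the generators of interest. Because $0$ and the $p_i$ are all real, the loop $\lambda_i$ obtained by concatenating $\alpha_i \cdots \alpha_1 \alpha_0$ (homotopic to a single loop enclosing $\{0,p_1,\ldots,p_i\}$ from the upper half-plane) is sent by $\conj$ to its mirror in the lower half-plane, and an explicit homotopy that does not cross any remaining branch points (all of which lie on the real axis outside the loop) identifies it with $\lambda_i^{-1}$. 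Since $\lambda_i$ has monodromy $\tau_i \cdots \tau_1 \sigma$, this gives exactly the relations of condition~(5), with the case $i=0$ reading $\gamma \sigma \gamma = \sigma^{-1}$. Conversely, any involution $\gamma$ satisfying these relations reconstructs a real structure $\varphi$ on $\pi$ uniquely via the covering space picture.

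For the enumeration, a real isomorphism between $(\pi,\varphi)$ and $(\pi',\varphi')$ corresponds to a simultaneous $\mathbb{S}_d$-conjugation of the tuple $(\gamma,\sigma,\tau_1,\ldots,\tau_r)$, as it must intertwine both the monodromies and the involution. The orbit-stabilizer argument from the first paragraph therefore yields $\tilde H_g(\mu,\nu) = (1/d!) \cdot N$, with $N$ the number of tuples satisfying (1)--(5). For $H_g(\mu,\nu)$, we count only the $\mathbb{S}_d$-orbits of tuples $(\sigma,\tau_1,\ldots,\tau_r)$ admitting some compatible $\gamma$; since the existence of $\gamma$ is preserved under simultaneous conjugation, this equals $(1/d!)$ times the number of such tuples, as claimed. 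The main obstacle I anticipate is the careful homotopy argument identifying $\conj(\lambda_i)$ with $\lambda_i^{-1}$: this is exactly where the \emph{positivity} of the $p_i$ enters essentially, by ensuring that the branch points can be grouped into consecutive initial segments enclosed by loops whose mirror images are homotopic to their inverses, and it is also the reason condition~(5) is naturally phrased in terms of the partial products $\tau_i \cdots \tau_1 \sigma$ rather than the individual transpositions $\tau_i$.
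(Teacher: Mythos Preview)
Your proposal is correct and follows essentially the same route as the paper: choose a real base point, take generators $l_0,l_1,\ldots,l_r$ of $\pi_1$ by loops through the upper half-plane, use the key homotopy identity $\conj\circ(l_i\cdots l_0)=(l_i\cdots l_0)^{-1}$ (this is exactly the paper's equation for conjugated loops), let $\gamma$ be the action of $\varphi$ on the labeled fiber, and finish with orbit--stabilizer. One small slip: in your orbit--stabilizer sentence the number of labelings producing a \emph{fixed} tuple is $|\Aut(\pi)|$, not $d!/|\Aut(\pi)|$; the latter is the number of \emph{distinct} tuples arising from $\pi$, which is what you actually need, and the conclusion is unaffected.
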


To prove this lemma, let us explain the meaning of $\gamma$ and condition \ref{gammacond}.

\begin{construction} \label{constr-realcover}
  We fix once and for all the following data: We pick $r$ real and strictly positive numbers/points $0 < p_1 < p_2 < \cdots < p_r < \infty$ on $\P^1_\R$.
	We choose $-1 \in \P^1$ as base point and fix non-intersecting paths $s_0, s_1, \ldots, s_r$ from $-1$ to $0, p_1, \ldots, p_r$ resp.\ which lie,
	except for the starting and end points, in the upper half of $\P^1$ (i.e.\ the half where the imaginary part is strictly positive). By ``adding'' small
	positively oriented loops around $0, p_1, \ldots, p_r$, we get loops $l_0, l_1, \ldots, l_r$ resp.\ which generate
	$\pi_1 := \pi_1(\P^1\setminus\{0,\infty,p_1,\ldots, p_r\}, -1)$. By our choices, we see that for all $i = 0, \ldots, r$
	\begin{equation}
		\label{eq:conjugatedloops}
		\text{conj} \circ (l_i \cdots l_0) = (l_i \cdots l_0)^{-1} \quad \in \pi_1.
	\end{equation}
	
	Given a tuple $(\gamma,\sigma,\tau_1,\ldots,\tau_r)$ satisfying the conditions of \ref{lem-tuples}, we construct a \emph{real} cover as follows.
	Using only $(\sigma,\tau_1,\ldots,\tau_r)$ and the first four conditions, we can use the well-known Hurwitz construction to obtain a (connected) cover
	$\pi : C \to \P^1$ with the given ramification profile $\mu$ resp.\ $\nu$ over $0$ resp.\ $\infty$ and simple ramification over each $p_i$.
	Moreover, the preimages of $-1$ are labeled, i.e.\ $\pi^{-1}(-1) = \{q_1, \ldots, q_d\}$, and the monodromy action of the $l_i$ is described by
	$(\sigma,\tau_1,\ldots,\tau_r)$.
	Based on $\gamma$, we define the real involution $\varphi$ as follows. Let $q \in C$ be an unramified point.
	Choose a path $s$ in $\P^1\setminus\{0,\infty,p_1,\ldots, p_r\}$ from $-1$ to $\pi(q)$, and let $t := \text{conj} \circ s$ be the conjugated path.
	Lift $s$ to a path $\tilde{s}$ with end point $q$ and let $q_k$ be its starting point.
	Lift $t$ to a path $\tilde{t}$ with starting point $q_{\gamma(k)}$ and let $q'$ be its end point. We set $\varphi(q) := q'$.
	First of all, note that this is well-defined, i.e.\ does not depend on the choice of $s$. To show this, it suffices to consider $q = q_l \in \pi^{-1}(-1)$
	and $s= l_i \cdot l_{i-1} \cdots l_0$ (as these loops also generate $\pi_1$).
	In this case $t = s^{-1}$ and the corresponding permutations are $\rho := \tau_i\circ\cdots\circ\tau_1\circ\sigma$ resp.\ $\rho^{-1}$.
	Our construction yields $\varphi(q_l) = q_{l'}$ with
	\[
	  l' = \rho^{-1}(\gamma(\rho^{-1} (l))).
	\]
	But condition \ref{gammacond} implies
	\[
	  \rho^{-1} \circ \gamma \circ \rho^{-1} = \gamma \circ \rho \circ \gamma \circ \gamma \circ \rho^{-1} =  \gamma
	\]
	and thus $\varphi(q_l) = q_{\gamma(l)}$ independent of $s$. By standard arguments, $\varphi$ can be extended to all points of $C$ and
	is a smooth involution. Moreover, the property $\pi \circ \varphi=\text{conj} \circ \pi$ holds by construction.
\end{construction}

\begin{proof}[Proof of \ref{lem-tuples}]
  The previous construction yields a map $h : \mathcal{T} \to \mathcal{R}$ from the set $\mathcal{T}$ of tuples $(\gamma,\sigma,\tau_1,\ldots,\tau_r)$
	satisfying the given condition to the set $\mathcal{R}$ of pairs $(\pi,\varphi)$ of covers $\pi$ with real structure $\varphi$, modulo
	real isomorphisms.
	Let us consider the action of $\mathbb{S}_d$ on $\mathcal{T}$ by conjugation (coordinatewise). It is well-known that the only change this
	action causes in the Hurwitz construction is a relabeling of $\pi^{-1}(-1)$. As we also conjugate $\gamma$ accordingly,
	we see that $h$ is invariant under this action. Given a pair $(\pi,\varphi)$ representing an element in $\mathcal{R}$,
	we can construct a preimage under $h$ as follows. We fix a labeling of $\pi^{-1}(-1)$ by $\{1, \ldots, n\}$ and
	define $(\sigma,\tau_1,\ldots,\tau_r)$ as the monodromy representation of the loops $l_0, l_1, \ldots, l_r$ (notation as above).
	Additionally, $\gamma$ is just given by the action of $\varphi$ on $\pi^{-1}(-1)$. If another pair $(\pi', \varphi')$ and
	a labeling of $\pi'^{-1}(-1)$ produce the same element in $\mathcal{T}$, the two pairs must be real isomorphic. Again,
	this statement is well-known when forgetting the real structures. Indeed, the isomorphism $\alpha$ between $\pi$ and $\pi'$ can be constructed
	for example by lifting paths (similar to the construction of $\varphi$ in \ref{constr-realcover}). Using this description
	and the fact that $\varphi$ and $\varphi'$ can also be described in term of lifting paths and $\gamma=\gamma'$, we see that $\alpha$
	is in fact a real isomorphism.
	%\todo{genauer??}
	It follows that
	\[
	  h : \mathcal{T}/\mathbb{S}_d \to \mathcal{R}
	\]
	is a bijection. Recall that an automorphism of a cover $\pi$ is determined by its action on any unramified fiber, i.e.\
	can be identified with a permutation. Under this identification, we see that for any tuple $T \in \mathcal{T}$ we have
	\[
	  \text{Stab}_{\mathbb{S}_d}(T) = \Aut_\R(h(T)).
	\]
	It follows that $|\mathcal{T}| / d!$ is equal to the \emph{weighted} cardinality of $\mathcal{R}$ and the statement follows.
	%(The additional factors $|\Aut(\mu)| \cdot |\Aut(\nu)|$ account for the marking of the preimages
	%of $0$ and $\infty$, which we omitted here to keep things simpler).
\end{proof}

%A cover defines a morphism from the fundamental group of $\P^1\setminus\{0,\infty,p_1,\ldots, p_r\}$ to $\mathbb{S}_d$, a loop
%(chosen compatibly with the real structure) around $0$ yields the element $\sigma$ which therefore has cycle type $\mu$, a loop
%around $p_i$ yields a transposition $\tau_i$. Since a loop around $0$ and $p_1,\ldots,p_r$ is homotope to a loop around $\infty$,
%the composition of these elements must have cycle type $\nu$. The subgroup generated by these elements acts transitively since
%the cover is connected. The element $\gamma$ encodes the involution, the condition enforced on $\gamma$ is obtained from the
%fact that a loop around the points $0$, $p_1,\ldots,p_i$ in the real locus is oriented in the opposite direction by the action of
%the involution.

% We need to multiply by $\frac{|\Aut(\mu)| \cdot |\Aut(\nu)|}{d!}$, since we implicitly fixed a labeling of the $d$ sheets of the cover, and since we chose to mark the preimages of the two special branch points.
% Vice versa, each such tuple of elements in $\mathbb{S}_d$ comes from a cover.
%
% \todo[inline]{Das Lemma sollten wir vielleicht ausfuehrlich beweisen (oder irgendwo ne Referenz finden? Bei Cadoret steht das ja nicht in der Form, oder?)
% Dabei sollten wir dann auch sehen, ob das oben mit den (reellen) Automorphismen stimmt..
% ALternativ koennten wir natuerlich die Permutationentupel-Version als Definition fuer unsere Zahlen nehmen, denn das ist es ja was wir zaehlen, richtig? }

\section{Tropical real double Hurwitz numbers with positive real branch points}\label{sec-tropical}

\subsection{Tropical covers}
For our purposes, we can restrict to explicit abstract tropical curves, and to covers of a model of the tropical projective line containing two ends. We recall the relevant definitions for this situation.

A \emph{(abstract, explicit) tropical curve} is a connected metric graph $C$ satisfying the following properties. A vertex is called a \emph{leaf} if it is one-valent and an (inner) vertex otherwise. An edge $e$ is called an \emph{end} and has length $l(e)=\infty$ if it is adjacent to a leaf, otherwise it is called a \emph{bounded edge} and has a length $l(e)\in \R$. The \emph{valence} $\val(V)$ of each (inner) vertex is at least $3$. %All leaves are marked. \todo{anpassen?}
A \emph{flag} of a tropical curve is a tuple $(V,e)$ of a vertex $V$ and an adjacent edge $e$ that can be viewed as a directed edge $e$ pointing away from $V$.

The number $g=b^1(C)$, also known as the circuit rank of $C$, is called the \emph{genus} of the tropical curve $\Gamma$.

The \emph{combinatorial type} of a tropical curve is obtained by omitting the length data.

 We denote by $L$ the model of the tropical projective line with two ends, i.e.\ $L=\mathbb{R}\cup \{\pm \infty\}$.

\begin{definition}[Tropical covers] \label{def-tropcover}A \emph{tropical cover} of $L$ is a continuous map $\pi:C \rightarrow L$ from an abstract tropical curve $C$ satisfying:
\begin{itemize}
  \item $\pi$ is integral affine-linear on each edge $e$, i.e.\ if we understand
$e$ as open interval $(0,l(e))$ where $l(e)$ denotes its length, then
$\pi_{\mid e}$ maps $t\in (0,l(e))$ to $w_et+a$ for some starting point $a\in L$ and some nonzero integer $w_e$ which is defined up to sign and called the \emph{weight} of $e$.
If we fix a flag $(V,e)$ of an edge $e$ and denote by $V'$ the other vertex of $e$, we use the convention that $w_e$ is negative if $\pi(V')<\pi(V)$ and positive otherwise.
 \item $\pi$ fulfills the \emph{balancing condition} at each (inner) vertex $V$:
$$\sum_{(V,e)} w_e=0,$$ where we sum over all flags containing $V$ and use the sign convention from above.
 The sum of the positive weights (or equivalently, minus the sum of the negative weights) is called the \emph{local degree}
of $\pi$ at $V$ and is denoted by $\deg_{\pi}V$. For a point $a$ on an edge $e$ of $C$, we define the local degree to be equal to $\deg_{\pi}a=|w_e|$.
 \end{itemize}
\end{definition}

The continuity implies that leaves are mapped to $\pm\infty$.

Sometimes one also allows edges of weight $w_e=0$, i.e.\ edges which are contracted to a point.
As contracted edges do not play a role for counting covers, we neglect them
here.

\begin{definition}
 Let $\pi:C \rightarrow L$ be a cover. The balancing
condition implies that for every point $\tilde a$ in $L$ the sum
\begin{equation}
 \sum_{a|\pi(a)= \tilde a} \deg_\pi a
\end{equation}
is the same. This number is called the \emph{degree} $\deg(\pi)$ of $\pi$.
\end{definition}
\begin{definition}
 We say that a cover $\pi:C \rightarrow L$ is \emph{$3$-valent}, if $C$ has only $3$-valent vertices besides the leaves. We call the $3$-valent vertices the \emph{ramification points} of a cover and their images the \emph{branch points}.
\end{definition}

Two covers $\pi:C \rightarrow L$ and $\pi':C' \rightarrow L$ are called isomorphic, if there is an isomorphism $\varphi$ of the underlying abstract tropical curves (i.e.\ a homeomorphism respecting the edge lengths and the marking of the leaves) satisfying $\pi'\circ \varphi=\pi$.
\begin{remark}
 Note that the only automorphisms of a $3$-valent cover (where the images of the $3$-valent vertices are distinct) arise due to \emph{wieners} and \emph{balanced forks} as in \cite{CJM10}, Lemma 4.2 and Figure 2:
\begin{center}
\input{figs/wienerforks.TpX}
\end{center}
The automorphism group of a $3$-valent cover $\pi:C \rightarrow L$ thus has size $|\Aut(\pi)|=2^{W+B}$, where $W$ denotes the number of wieners and $B$ denotes the number of balanced forks.
\end{remark}

Note that the balancing condition implies that for an inner vertex $V$, either one of the three adjacent flags has negative weight and two positive weight or vice versa. We refer to an edge $e$ such that the flag $(V,e)$ has negative weight as an incoming edge and to an edge with positive weight as an outgoing edge. We encode this in pictures by drawing the corresponding arrows:
\begin{center}
 \input{./figs/arrows.pstex_t}
\end{center}

%In the following definition, \emph{balanced forks} (see \cite{CJM10}, Lemma 4.2 and Figure 2) play a role; these are pairs of ends of the same weight adjacent to the same vertex.

We refer to edges of even (resp.\ odd) weight as even edges (resp.\ odd edges).
\begin{definition}[Real tropical covers]\label{def-realcover}
 A $3$-valent tropical cover $\pi:C\rightarrow L$ together with a coloring of the edges of $C$ in three colors (that in this paper we encode with dashed lines, bold lines or normal lines) is called \emph{real} if the following conditions are satisfied:
\begin{itemize}
\item %The two edges of a balanced fork or wiener have the same color.
No edges except wiener or fork edges can be bold.
\item An odd edge which is not part of balanced fork or wiener is normal.
\item The colors of the three adjacent edges of any vertex have to fit one of the following pictures, depending on the parity of the weight and the orientation of the adjacent edges as above, respectively the reflection of the pictures with the arrows turned around if two edges are incoming and one is outgoing. If the parity is not specified it can be even or odd.
\begin{center}
\input{./figs/vertextypes.pstex_t}
\end{center}
\end{itemize}
\end{definition}

The \emph{combinatorial type} of a real tropical cover is the combinatorial type of the underlying abstract curve together with the colors, weights and directions for all edges (i.e.\ signed weights as in the convention of definition \ref{def-tropcover} for all flags).
The coloring essentially plays the role of a real involution in the classical picture (to be made precise in Lemma \ref{lem-gamma}).

Now we are ready to define tropical real double Hurwitz numbers given two partitions $\mu$ and $\nu$ of a degree $d$ and a genus $g$. In the following, we always assume that $\mu$, $\nu$, $d$ and $g$ are chosen such that $r=2g-2+\ell(\mu)+\ell(\nu)>0$, so that the tropical covers under consideration have at least one vertex.

\begin{definition}[Tropical real double Hurwitz numbers]\label{def-realtrophurwitz}
Fix two partitions $\mu$ and $\nu$ of an integer $d\geq1$ and a genus $g$. Fix $0<r=2g-2+\ell(\mu)+\ell(\nu)$ points $p_1, \ldots, p_{r}$ in $\R\subset L$. 
The \textit{tropical real double Hurwitz number (with real structures)} $\tilde{H}_{g}^{\trop}(\mu,\nu)$ is defined as the weighted number $\tilde{H}_{g}^{\trop}(\mu,\nu)=\sum_{\pi} \tilde{m}(\pi)$ of tropical degree $d$ real covers $\pi: C \rightarrow L$ where
\begin{itemize}
 \item $C$ is an abstract tropical curve of genus $g$;
\item the tuple of weights of ends adjacent to leaves mapping to $-\infty$ is $\mu$, the tuple of weights of ends adjacent to leaves mapping to $\infty$ is $\nu$;
%\item the leaves are marked (to make those of the same weights distinguishable);
\item the preimage $\pi^{-1}(p_i)$ contains a vertex of $C$.
\end{itemize}
It follows from the Euler characteristic of $C$ together with the Riemann-Hurwitz formula that $C$ has only $3$-valent inner vertices, and that there are $r>0$ such vertices.

We now define the multiplicity $\tilde{m}(\pi)$ with which a real tropical cover $\pi:C\rightarrow L$ contributes to $\tilde{H}_{g}^{\trop}(\mu,\nu)$.
Assume $\pi$ has $W$ wieners, $B$ balanced forks, $E$ dashed or normal even bounded edges and $k$ bold wieners of weights $w_1,\ldots,w_k$. Then we define $$\tilde{m}(\pi):=\frac{1}{2^{W+B}}\cdot 2^{E}\cdot \prod_{i=1}^k w_i.$$

The  \textit{tropical real double Hurwitz number (without real structures)} $H_{g}^{\trop}(\mu,\nu)$ is defined analogously, we only change the multiplicity with which we count, i.e.\ we set $H_{g}^{\trop}(\mu,\nu)=\sum_{\pi} m(\pi)$. Here, $m(\pi)= \tilde{m}(\pi)$ except for the following two situations that we refer to as \emph{chains of wieners}. Chains of wieners can only appear if $\mu,\nu\in\{d,(\frac{d}{2},\frac{d}{2})\}$.
\begin{enumerate}
 \item If $d\equiv 2 \;\mod 4$ there is a chain of wieners with the following colors whose multiplicity $m(\pi)$ we define to be $0$:
\vspace{1ex}
\begin{center}
 \begin{picture}(0,0)%
\includegraphics{wienerchain1.pstex}%
\end{picture}%
\setlength{\unitlength}{3947sp}%
\begingroup\makeatletter\ifx\SetFigFont\undefined%
\gdef\SetFigFont#1#2#3#4#5{%
  \reset@font\fontsize{#1}{#2pt}%
  \fontfamily{#3}\fontseries{#4}\fontshape{#5}%
  \selectfont}%
\fi\endgroup%
\begin{picture}(4149,174)(3739,-5398)
\end{picture}%

\end{center}
\vspace{1ex}
\item If $d\equiv 0\; \mod 4$ any balanced fork or wiener in a chain of wieners can be either bold or normal, e.g.\ as in the picture:
\vspace{1ex}
\begin{center}
 \begin{picture}(0,0)%
\includegraphics{wienerchain2.pstex}%
\end{picture}%
\setlength{\unitlength}{3947sp}%
\begingroup\makeatletter\ifx\SetFigFont\undefined%
\gdef\SetFigFont#1#2#3#4#5{%
  \reset@font\fontsize{#1}{#2pt}%
  \fontfamily{#3}\fontseries{#4}\fontshape{#5}%
  \selectfont}%
\fi\endgroup%
\begin{picture}(4149,216)(3739,-5419)
\end{picture}%

\end{center}
\vspace{1ex}
We say that two such chains of wiener $\Gamma_1$ and $\Gamma_2$ have \emph{complementary colors}, if each wiener or balanced fork which is bold in $\Gamma_1$ is normal in $\Gamma_2$ and vice versa.
We define the sum of the multiplicities of a pair of chains of wieners in complementary colors to be
$$ m(\Gamma_1)+m(\Gamma_2):= \tilde{m}(\Gamma_1)+\tilde{m}(\Gamma_2)-2^{W-1},$$
where $W$ denotes the number of wieners.% and $B$ the number of balanced forks.
\end{enumerate}

\end{definition}
It turns out that chains of wieners are the only covers that allow more than one coloring satisfying the requirements of definition \ref{def-realcover} (see lemma \ref{lem-wienerchain}).

Obviously, the degree of a cover contributing to $\tilde{H}_{g}^{\trop}(\mu,\nu)$ or $H_{g}^{\trop}(\mu,\nu)$ is $d=|\mu|=|\nu|$.

\subsection{Monodromy graphs}
Counting tropical covers as above can be simplified by grouping covers with the same combinatorial type, and neglecting edge lengths.
The result obtained from neglecting edge lengths is what we call a monodromy graph. We use the same notions such as leaf, inner vertex etc.\ for monodromy graphs as for abstract tropical curves.

\begin{definition}\label{def-monodromygraph}
For fixed $g$ and partitions $\mu$ and $\nu$, a graph $\Gamma$ is a \emph{(real) monodromy graph} if:
\begin{enumerate}
\item $\Gamma$ is a connected, directed graph of genus $g$.
%\item $\Gamma$ has $\ell(\mu)+\ell(\nu)$ marked leaves. In addition to their marking, the ends are labeled with the parts of $\mu$ and $\nu$.
%An end labeled by a part of $\mu$ is directed inward and called in-end, an end labeled by a part of $\nu$ is directed outward and called out-end.
\item All other vertices of $\Gamma$ are $3$-valent.
\item Every edge $e$ of the graph is equipped with a \emph{weight} $w(e)\in \N$. The weights  satisfy the \emph{balancing condition} at each inner vertex: the sum of all weights of  incoming  edges equals the sum of the weights of all outgoing edges.
\item $\Gamma$ has $\ell(\mu)+\ell(\nu)$ leaves, $\ell(\mu)$ of them oriented inwards
(the \emph{in-ends}), and $\ell(\nu)$ of them oriented outwards (the \emph{out-ends}).
The partition given by the weights of all in-ends resp.\ out-ends is $\mu$ resp.\ $\nu$.
\item The inner vertices are ordered compatibly with the partial ordering induced by the directions of the edges.
\item The edges of $\Gamma$ are colored following the rules of definition \ref{def-realcover}.
\end{enumerate}
\end{definition}

From a cover $\pi:C\rightarrow L$, we obtain a monodromy graph $\Gamma$ by taking the combinatorial type of $C$, ordering the inner vertices according to their images in $L$ and orienting the edges as follows: if $e$ is an edge connecting the vertices $V'$ and $V$ and $\pi(V')<\pi(V)$, we let $e$ point from $V'$ to $V$.
Vice versa, from a monodromy graph we obtain a tropical cover of $L$ such that the preimages of the $p_i$ each contain a $3$-valent vertex by mapping the vertices as prescribed by the ordering. The images of the edges are then determined, and since the weights together with the image intervals $[p_i,p_j]\subset \R\subset L$ also determine the length in the abstract tropical curve, the tropical cover is uniquely recovered from the monodromy graph.

\begin{remark}\label{rem-dircyc}
Since the vertices in a monodromy graph are totally ordered, any orientation occurring has no directed cycles. It follows from the balancing condition that $\Gamma$ cannot have sinks or sources at inner vertices.
\end{remark}

\begin{remark}
We can define the multiplicity $ \tilde{m}(\Gamma)$ of a monodromy graph $\Gamma$ analogously to definition \ref{def-realtrophurwitz}.
Obviously, the real tropical Hurwitz number equals
\begin{equation} \tilde{H}_{g}^{\trop}(\mu,\nu)=\sum_\Gamma \tilde{m}(\Gamma),\label{eq-mongraphs}\end{equation}
where the sum goes over all real monodromy graphs for $g$, $\mu$ and $\nu$.
\end{remark}

\begin{remark}\label{rem-moncount}
 We can simplify the count of tropical covers further by grouping covers with the same combinatorial type, i.e.\ by declaring two monodromy graphs equivalent if they differ at most in the total ordering of the vertices. Obviously, each representative in an equivalence class is counted with the same weight, and so we can sum over equivalence classes instead and weight each class by $ \tilde{m}(\Gamma)$ times the number of total vertex orderings respecting the partial ordering induced by the orientation of the edges.
Abusing notation, we denote by $\Gamma$ the data defining an equivalence class, i.e.\ a monodromy graph without an order of the vertices.
For an equivalence class $\Gamma$, we denote its size (i.e.\ the number of total vertex orderings respecting the partial ordering induced by the orientation of the edges) by $o(\Gamma)$. The discussion above can be summed up by saying that
$$ \tilde{H}_g^{\trop}(\mu,\nu)= \sum_{\Gamma} o(\Gamma) \tilde{m}(\Gamma)$$
where the sum goes over all monodromy graphs without vertex orderings.
The analogous statement holds for $H_g^{\trop}(\mu,\nu)$.
\end{remark}

\begin{example}
 In Figure \ref{ex-422}, we compute $\tilde{H}_{1}^{\trop}((4),(2,2))= 2+1+1+2+1+1=8$ using monodromy graphs.
When we want to determine $H_{1}^{\trop}((4),(2,2))$, we have to subtract $2^{W-1}=1$ for each of the two pairs of chains of wieners of complementary colors, so we obtain $H_{1}^{\trop}((4),(2,2))= 8-2=6$.
\begin{figure} 
 \input{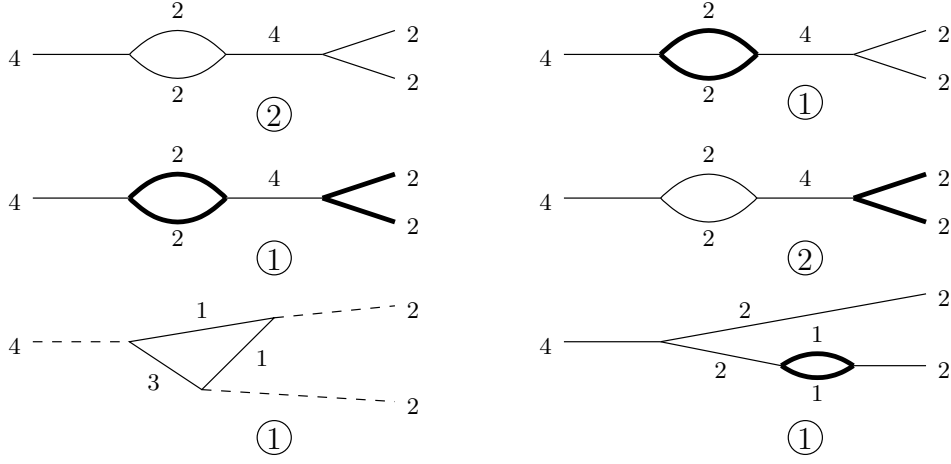}
 \caption{$\tilde{H}_{1}^{\trop}((4),(2,2))= 2+1+1+2+1+1=8$}
 \label{ex-422}
\end{figure}
%The bottom two graphs have to be considered twice because there are two ways in each case to mark the right ends.
\end{example}

\subsection{A real monodromy graph from a tuple in \texorpdfstring{$\mathbb{S}_d$}{Sd}}\label{sec:mon-graph}

Similarly to \cite{CJM10}, Lemma 4.2, we can construct a monodromy graph from a tuple $(\gamma,\sigma,\tau_1,\ldots,\tau_r)$ of elements of $\mathbb{S}_d$ satisfying the requirements of lemma \ref{lem-tuples}:

We start by drawing edges with weights as given by the cycle lengths of the permutation $\sigma$. A transposition can either cut or join two cycles, so we introduce a vertex that either cuts one edge into two or joins two edges to one.
We encode the effect of the conjugation with $\gamma$ in the colors.

Let us first discuss the effect of the conjugation.
\begin{lemma}\label{lem-gamma}
 Let $\sigma,\gamma \in \mathbb{S}_d$ satisfy $\gamma \circ \sigma\circ\gamma=\sigma^{-1}$ and $\gamma^2=\id$. Let $\sigma=c_1\circ \cdots\circ c_r$ be the cycle decomposition of $\sigma$.
Then the conjugation with $\gamma$
\begin{itemize}
 \item can exchange two cycles of the same length, i.e.\ satisfy $\gamma\circ c_i \circ \gamma= c_j^{-1}$ for $i\neq j$, then also $\gamma\circ c_j \circ \gamma= c_i^{-1}$ and all numbers in the support of $c_i, c_j$ are in the support of $\gamma$.
\item or invert a cycle $c_i$. Then if $c_i$ is of odd length, all numbers but one of the support of $c_i$ appear in the support of $\gamma$. If $c_i$ is of even length, either all or all but two numbers of the support of $c_i$ appear in the support of $\gamma$. In the latter case, the two fixed numbers are of distance $\frac{r}{2}$ apart, where $r$ denotes the length of the cycle. In the former case, there are two pairs of neighboring numbers which are exchanged by the action of $\gamma$, and these pairs are again of distance $\frac{r}{2}$ apart.
\end{itemize}
\end{lemma}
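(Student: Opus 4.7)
The plan is to reduce the statement to a case analysis on how $\gamma$ acts on each cycle of $\sigma$ individually. The starting point is that conjugation sends a cycle to a cycle of the same length: if $c=(a_0,a_1,\dots,a_{k-1})$ then $\gamma\circ c\circ\gamma=(\gamma(a_0),\gamma(a_1),\dots,\gamma(a_{k-1}))$. Applying this to $\sigma=c_1\circ\cdots\circ c_r$, the identity $\gamma\circ\sigma\circ\gamma=\sigma^{-1}=c_1^{-1}\circ\cdots\circ c_r^{-1}$ combined with the uniqueness of the disjoint cycle decomposition forces a permutation $\pi$ of $\{1,\dots,r\}$ with $\gamma\circ c_i\circ\gamma=c_{\pi(i)}^{-1}$ for all $i$. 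Since $\gamma^2=\id$ implies $\pi^2=\id$, this yields the announced dichotomy: either $\pi$ swaps two cycles of the same length, or $\pi(i)=i$ and $\gamma\circ c_i\circ\gamma=c_i^{-1}$.

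In the first (swap) case I would write $c_i=(a_0,\dots,a_{k-1})$, $c_j=(b_0,\dots,b_{k-1})$, and let $s$ be defined by $\gamma(a_0)=b_s$. The identity $\gamma\circ c_i=c_j^{-1}\circ\gamma$ evaluated at $a_m$ gives the recursion $\gamma(a_{m+1})=c_j^{-1}(\gamma(a_m))$, so by induction $\gamma(a_m)=b_{s-m}$ (indices modulo $k$). Hence $\gamma$ restricts to a bijection $\mathrm{supp}(c_i)\to\mathrm{supp}(c_j)$ and none of the $2k$ elements involved are fixed by $\gamma$, proving the first bullet of the lemma.

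In the second (inversion) case the analogous relation implies $\gamma$ preserves $\mathrm{supp}(c_i)$, so writing $\gamma(a_0)=a_s$ and iterating $\gamma(a_{m+1})=c_i^{-1}(\gamma(a_m))$ yields $\gamma(a_m)=a_{s-m}$ for every $m\in\mathbb{Z}/k\mathbb{Z}$. A fixed point $a_m=\gamma(a_m)$ then corresponds to a solution of $2m\equiv s\pmod k$. When $k$ is odd, $2$ is invertible modulo $k$ and the congruence has exactly one solution, matching the ``all but one'' statement. When $k$ is even, the congruence has either two solutions differing by $k/2$ (if $s$ is even) or none (if $s$ is odd), giving precisely the two fixed numbers at distance $k/2$ in one subcase and no fixed point in the other. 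In the remaining subcase ($k$ even, $s$ odd), a pair of neighbours $(a_m,a_{m+1})$ is exchanged by $\gamma$ exactly when $a_{s-m}=a_{m+1}$, i.e.\ $2m\equiv s-1\pmod k$; since $s-1$ is even this again has two solutions differing by $k/2$, producing the two pairs of swapped neighbouring numbers at cyclic distance $k/2$ apart, as claimed.

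The only real difficulty is bookkeeping: one must carefully track the sign convention (inverting a cycle reverses its internal order) and manipulate indices in $\mathbb{Z}/k\mathbb{Z}$ without confusing the parity conditions on $k$ and $s$. Once the shift parameter $s$ is introduced, however, the lemma reduces to elementary modular arithmetic and the different cases drop out by inspection.
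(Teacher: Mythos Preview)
Your proof is correct. Both you and the paper start from the conjugation identity $\gamma\circ c\circ\gamma=(\gamma(m_1),\ldots,\gamma(m_k))$, but the subsequent arguments differ in style. You parametrize the action of $\gamma$ on a single cycle by a shift $s$, obtain the closed formula $\gamma(a_m)=a_{s-m}$, and reduce all the fixed-point and neighbour-swap assertions to the single congruence $2m\equiv s\pmod k$ (respectively $2m\equiv s-1\pmod k$); the case split on the parity of $k$ and of $s$ then falls out mechanically. The paper instead argues more informally: it uses the parity of $|\mathrm{supp}(\gamma)\cap\mathrm{supp}(c_i)|$ to see that the number of $\gamma$-fixed elements in the cycle has the same parity as the cycle length, asserts without the explicit formula that having more fixed points would prevent $\gamma$ from inverting $c_i$, and handles the even-length, no-fixed-point case by an ad hoc relabelling. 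Your approach is a bit more to set up but is uniform and makes the distance-$\tfrac{k}{2}$ claims immediate, whereas the paper's argument is shorter but leaves a couple of steps (e.g.\ why three or more fixed points are impossible) to the reader.
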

%\todo{Sollen wir da noch ein Bild mit unseren Pfeilbildchen einfuegen? Ist irgendwie bloed zu erstellen..}
The following picture shows an example of the action of $\gamma$ exchanging two cycles, acting on an odd cycle with one fixed point, acting on an even cycle without fixed point and with two fixed points:
\begin{center}
 \input{./figs/arrowsgamma.pstex_t}
\end{center}

\begin{proof}
 If $c_i=(m_1,...,m_r)$ then the conjugation satisfies $\gamma\circ c_i \circ \gamma= (\gamma(m_1),\ldots,\gamma(m_r))$. If $\gamma\circ c_i \circ \gamma= c_j^{-1}$, then all numbers in the support of $c_i$ must be thrown to numbers in the support of $c_j$. Since $\gamma$ is a product of disjoint transpositions, we also have $\gamma\circ c_j \circ \gamma= c_i^{-1}$.
Now assume  $\gamma\circ c_i \circ \gamma= c_i^{-1}$. Since $\gamma$ is a product of disjoint transpositions, the size of its support is even. Assume first $c_i$ is of odd length, then the support of $c_i$ minus the support of $\gamma$ has odd size. It cannot consist of more than one element, since otherwise the action of $\gamma$ could not invert $c_i$. If $c_i$ is of even length, its support minus the support of $\gamma$ has even length. Again, it cannot consist of more than two elements, since otherwise $c_i$ could not be inverted. If there are two fixed numbers, then they need to be of distance $\frac{r}{2}$ apart. If there are no fixed numbers, we can relabel the numbers so that the cycle is $(1\;\ldots\; r)$ and that $\gamma$ is $(1\; r) (2 \; r-1) \cdots (\frac{r}{2}\; \frac{r}{2}+1)$. Then the two pairs of neighbors are $(1,r)$ and $(\frac{r}{2}, \frac{r}{2}+1)$. They are $\frac{r}{2}$ positions apart in the cycle.
\end{proof}
% Let us first discuss possible effects of the conjugation with $\gamma$ for a cycle of odd length in $\mathbb{S}_d$. Without restriction, we can assume that the cycle is $(1\;2\;\ldots\;m)$ for $m$ odd.
% We have $\gamma\circ(1\;2\;\ldots\;m)\circ\gamma^{-1}= (\gamma(1)\;\gamma(2)\;\ldots\;\gamma(m))$. We require that the latter equals the cycle $(m\;m-1\;\ldots\;1)$. Since $\gamma$ is of order $2$, i.e.\ a product of disjoint transpositions, it follows that $\{1,\ldots,m\}$ minus the support of $\gamma$ is an odd number, the requirement implies it must be one. We can write down the cycle such that the fixed element appears in the middle, i.e.\ we can assume without restriction that the fixed element is $\frac{m+1}{2}$ and that $\gamma=(1\;m)(2\;m-1)\cdots (\frac{m-1}{2}\;\frac{m+3}{2})$.

\begin{construction}\label{const-monodromy}
 Given a tuple $(\gamma,\sigma,\tau_1,\ldots,\tau_r)$ of elements of $\mathbb{S}_d$ satisfying the requirements of lemma \ref{lem-tuples}, draw left ends of weights the cycle lengths of $\sigma$.
Denote $\sigma_i:=\tau_i\circ\cdots\circ\tau_1\circ\sigma$. Compared to $\sigma_{i-1}$, we either cut a cycle or join two cycles, so we either draw a 3-valent vertex with two edges pointing out, or we merge two edges to one out-edge with a three-valent vertex.
We encode the effect of $\gamma$ in colors:
if the action of $\gamma$ exchanges two cycles, draw the corresponding edges bold. If $\gamma$ has two fixed elements in an even cycle, draw the corresponding edge dashed. All other edges are drawn with normal lines.
\end{construction}

\begin{lemma}
 For a tuple $(\gamma,\sigma,\tau_1,\ldots,\tau_r)$ of elements of $\mathbb{S}_d$ satisfying the requirements of lemma \ref{lem-tuples}, construction \ref{const-monodromy} yields a real monodromy graph. In particular, the coloring satisfies the rules of definition \ref{def-realcover}.
\end{lemma}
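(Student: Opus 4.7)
The plan is to verify the assertion vertex by vertex. The abstract graph-theoretic structure of $\Gamma$ -- connectivity (from the transitivity condition), $3$-valence of inner vertices (since each $\tau_i$ is a transposition), correct weight partitions at the in- and out-ends, balancing (the total cycle support is preserved), and the compatibility of the vertex ordering with the orientation -- is a standard consequence of the classical Hurwitz construction applied to $(\sigma,\tau_1,\ldots,\tau_r)$. The substance of the proof is therefore the verification that the coloring matches one of the vertex types in definition \ref{def-realcover} and that bold edges occur only in wieners or balanced forks.

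The key local input is the identity
\[
  \gamma \tau_i \gamma \;=\; \sigma_{i-1}^{-1} \tau_i \sigma_{i-1},
\]
obtained by writing $\tau_i = \sigma_i \sigma_{i-1}^{-1}$ and applying condition \ref{gammacond} to both $\sigma_{i-1}$ and $\sigma_i$. Writing $\tau_i=(a\;b)$, this forces the set equality $\{\gamma(a),\gamma(b)\} = \{\sigma_{i-1}^{-1}(a),\sigma_{i-1}^{-1}(b)\}$, which is essentially the only local constraint linking the transposition $\tau_i$, the cycle structure of $\sigma_{i-1}$, and the involution $\gamma$.

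I would then split into a case analysis of the vertex $V_i$. In the \emph{cut} case, $a,b$ lie in a common $\sigma_{i-1}$-cycle $c$ of length $k$ that splits into $\sigma_i$-cycles $c_1,c_2$. By lemma \ref{lem-gamma}, $\gamma$ either inverts $c$ or swaps it with another cycle $c'$; the swap case is ruled out immediately, since $c'$ would remain a single $\sigma_i$-cycle while $\mathrm{supp}(c)$ would now carry two, leaving $\gamma$ no consistent way to permute the $\sigma_i$-cycles. Parameterising the inverted $c$ by $\mathbb{Z}/k$ so that $\sigma_{i-1}|_c$ acts as $x\mapsto x+1$ and $\gamma|_c$ as $x\mapsto a_0-x$, the set equality has exactly two solutions: either $b = a + k/2$ (forcing $k$ even and $a_0$ odd, so $c$ is a normal even edge; one then checks $\gamma$ swaps $c_1$ with $c_2$, making them bold of weight $k/2$), or $a+b\equiv a_0+1\pmod k$ (so $\gamma$ inverts each $c_j$ individually, and a direct count of fixed points in each $c_j$ yields precisely the color assigned by construction \ref{const-monodromy}). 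In particular, option (i) is incompatible with $c$ being dashed (the parity of $a_0$ is wrong), so dashed cycles split into two normal odd edges. The symmetric analysis of \emph{join} vertices splits into the case where $\gamma$ inverts both incoming cycles (producing three normal edges of the correct parity, or a dashed even outgoing edge when both inputs are odd) and the case where $\gamma$ swaps the two incoming cycles (forcing them to be bold of equal weight and the outgoing edge to be normal even). In each case the resulting local picture matches exactly one of the allowed vertex types in definition \ref{def-realcover}.

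Finally, I would verify the global requirement that bold edges occur only in wieners or balanced forks (from which the condition on odd edges follows automatically, since any non-normal odd edge must be bold). A second application of the compatibility identity shows that if $(c_1,c_2)$ is a $\gamma$-exchanged pair of $\sigma_j$-cycles, then any later $\tau_{j'}$ meeting $\mathrm{supp}(c_1)$ must have its other transposed element in $\mathrm{supp}(c_2)$: any other choice leads to disjoint supports for the two sides of $\{\gamma(a),\gamma(b)\} = \{\sigma_{j'-1}^{-1}(a),\sigma_{j'-1}^{-1}(b)\}$, the same contradiction that ruled out the swap subcase of a cut. Hence a bold pair is either joined back into a single cycle at a subsequent J2-type vertex (producing two parallel bold edges, i.e.\ a wiener) or survives untouched to the out-ends (producing a balanced out-fork), and the time-reversed argument handles bold pairs already present in $\sigma$ itself (producing balanced in-forks). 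The main obstacle is the bookkeeping in the local case analysis -- correctly enumerating the fixed points of $\gamma|_{c_j}$ across all parity combinations of $k$, $k_1$, $k_2$, $a_0$, and excluding inconsistent option-parity pairs -- after which matching to definition \ref{def-realcover} and extracting the global coloring conditions is essentially formal.
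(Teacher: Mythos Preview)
Your argument is correct and complete in outline; the case analysis is set up properly and the compatibility identity $\gamma\tau_i\gamma=\sigma_{i-1}^{-1}\tau_i\sigma_{i-1}$ (equivalently $\{\gamma(a),\gamma(b)\}=\{\sigma_{i-1}^{-1}(a),\sigma_{i-1}^{-1}(b)\}$) is exactly the right local constraint. However, your route is genuinely different from the paper's. The paper does not derive or use this identity; instead it leans entirely on the structural dichotomy of lemma~\ref{lem-gamma}, in particular the fact that for an even cycle the two $\gamma$-fixed points (dashed case) or the two exchanged neighbor pairs (normal case) are exactly half the cycle length apart. The even--even--even vertex, for instance, is handled in one line: if a child even cycle of length $r_1$ were dashed, its two fixed points would sit $r_1/2$ apart, which is incompatible with being $r/2$ apart in the parent cycle; hence all three edges are normal. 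Your parameterisation by $\mathbb Z/k$ and the solution alternatives (i)/(ii) recover the same conclusions but through explicit coordinate computations rather than through the distance invariant.

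The trade-off is clear. Your approach is more self-contained and in particular makes the bold-edge constraint (``exchanged pairs can only be born from and die into a single normal even edge'') fully explicit, whereas the paper dispatches this in a single assertive sentence. On the other hand, the paper's proof is shorter precisely because the distance-$r/2$ fact from lemma~\ref{lem-gamma} packages the parity bookkeeping that you carry out by hand in the $(k,k_1,k_2,a_0)$ case split.
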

\begin{proof}
First note that cycles which are exchanged by the action of $\gamma$ must correspond to edges in a balanced fork or wiener. They can only appear from and be merged to one even normal edge; any other development does not satisfy the requirement on the action of $\gamma$.
If an even edge cuts into two odd edges, then the even cycle must have two fixed numbers since both odd cycles have one.
If an odd edge cuts into an odd and an even, then the even cycle cannot have fixed numbers since both odd cycles have one. Assume an even edge of weight $r$ cuts into two even edges of weight $r_1$ and $r_2$. If one of the new even edges, without loss of generality the one of weight $r_1$, had two fixed numbers, they would be $\frac{r_1}{2}$ numbers apart in the corresponding cycle. But then the fixed numbers cannot be $\frac{r}{2}$ apart in the cycle corresponding to the edge of weight $r$. Thus all involved cycles cannot have any fixed numbers and all lines are normal. The analogous reasons hold for the situation where two edges are merged.
%In all which are not excluded, it is easy to come up with an example of cycles matching the colors at a vertex.
%\todo{???}
In total, we can get exactly the pictures which are admitted for real monodromy graphs.
\end{proof}

Vice versa, for each monodromy graph, we can find a tuple yielding this graph when applying construction \ref{const-monodromy}. This will be obvious as soon as we find a formula for the number of tuples that give the same monodromy graph without markings on the ends and observe that it is nonzero. We have to understand this number anyway to obtain a weighted bijection.
This is the content of the following subsection.

\subsection{Real cut and join relations}

As usual, when hitting a permutation with a transposition, either a cycle is cut into two or two cycles are joined to one, and we can count how many transpositions there are that take us from one cycle type to another. Here, we want to take into account the effect of the involution $\gamma$ as well: we only count transpositions $\tau$ that satisfy $\gamma \tau \sigma \gamma= (\tau \sigma)^{-1}$.

\begin{lemma}\label{lem-vertexmult}
 The cut and join multiplicities, counting the number of transpositions that produce a certain cycle type from a given cycle type, each respecting the action of a given involution, are as given in the following picture:
\begin{center}
 \input{./figs/vertexmult.pstex_t}
\end{center}
The numbers in brackets are used if the two edges are indistinguishable, i.e.\ belong to a wiener or a balanced fork.
\end{lemma}

\begin{proof}
 Consider a vertex cutting an even edge into two bold edges. Without loss of generality, the cycle corresponding to the even edge is $(1\; \ldots\; 2k)$, and the involution is $(1 \;2k)(2 \;2k-1) \cdots (k\; k+1)$. How many transpositions cut $(1\;\ldots\; 2k)$ into two $k$-cycles in a way that $\gamma$ swaps them? One of the cycles must contain the numbers $1,\ldots,k$ and the other $k+1,\ldots,2k$, so there is only one way to cut, namely with $(1\; k+1)$.

Now consider an even dashed edge being cut into two odd edges of weights $k_1$ and $k_2$. We need to cut from the $k_1+k_2$-cycle corresponding to the even edge $k_1$ numbers around one of the two fixed numbers, so there are two possibilities to choose.
If $k_1=k_2$ and
%the two edges do not evolve differently (so they are indistinguishable),
the two edges are indistinguishable,
there is only one possibility since we do not distinguish the two cycles.

Let two odd edges be merged to one even dashed edge. Assume the cycles corresponding to the two odd edges are written in the order such that the middle number is the fixed number with respect to $\gamma$, then we have to choose the transposition consisting of the two starting numbers to merge.

Now assume two bold edges of weights $k$ are merged to an even edge. Without loss of generality, let the first $k$-cycle by $(1\;\ldots\; k)$, the second $(k+1\;\ldots\; 2k)$ and the involution $(1 \;2k) (2\; 2k-1)\cdots (k\; k+1)$.
We can then use the transpositions $(1 \;k+1)$ and $(i\; 2k+2-i)$ for $i=2,\ldots, k$ to merge, so there are $k$ choices.

Next assume we have an odd edge of weight $k$ cut into an even and an odd edge of weight $k_1$ and $k_2$. We need to cut out $k_2$ numbers around the fixed number in the $k$-cycle, so there is one transposition to choose.

If we have an even edge of weight $k$ cut into two even edges of weights $k_1$ and $k_2$, we can assume without loss of generality that the $k$-cycle is $(1\;\ldots\; k)$ and the involution is $(1\;k)(2\; k-1) \cdots (\frac{k}{2}\;\frac{k}{2}+1)$. That is, the involution contains two transpositions as factors that exchange neighboring numbers in the $k$-cycle, namely $(1\;k)$ and $(\frac{k}{2}\;\frac{k}{2}+1)$. When cutting, we can choose which pair of neighboring numbers exchanged by $\gamma$ should go into which cycle. If $k_1=k_2$ and
%the two edges do not evolve differently (so they are indistinguishable),
the two edges are indistinguishable,
there is only one possibility since we do not distinguish the two cycles.

Assume two even edges are merged into an even edge. As before, we have four possibilities to write the two even cycles with the exchanged neighbors outside or in the middle (two for each cycle). In each of the four cases, we can choose the transposition consisting of the two starting numbers, so altogether we have 4 choices.

Finally, let an odd and an even edge be merged into an odd edge. We assume again that the odd cycle is written with the fixed number in the middle, and the even cycle with one exchanged neighboring pair at the outsides and one in the middle (there are two possibilities to choose which neighboring pair should go where). Then we can pick the two starting numbers to join. Altogether, we have a choice of two transpositions.
\end{proof}

\subsection{The correspondence theorem}
Now we only need to combine the previous results to obtain the correspondence theorem for real Hurwitz numbers.

\begin{lemma}\label{lem-mult}
Fix a monodromy graph $\Gamma$. Assume it has $k$ bold wieners of weights $w_1,\ldots,w_k$. Denote by $W$ the total number of wieners, by $B$ the number of balanced forks and by $E$ the number of even dashed or normal bounded edges.
Then there are  $d!\cdot \frac{1}{2^{W+B}}\cdot 2^{E}\cdot \prod_{i=1}^k w_i$ tuples that yield $\Gamma$ when applying construction \ref{const-monodromy}.
\end{lemma}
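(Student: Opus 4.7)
The plan is to count tuples yielding $\Gamma$ by making sequential choices, following Construction~\ref{const-monodromy}. First I would pick the initial pair $(\sigma, \gamma)$ compatible with the in-end structure of $\Gamma$, and then each transposition $\tau_i$ at the corresponding vertex.

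For the initial pair, the cycle type of $\sigma$ is prescribed by the weights of the in-ends, and the type of $\gamma$'s action on each cycle (bold pair, dashed, normal even, or normal odd) is dictated by the coloring of the corresponding in-end of $\Gamma$. Using Lemma~\ref{lem-gamma} together with an elementary count of permutations and compatible involutions of prescribed type gives the number of valid $(\sigma,\gamma)$. For each inner vertex $v$ of $\Gamma$ (in the prescribed order), Lemma~\ref{lem-vertexmult} then gives the number of transpositions $\tau_i$ realizing the cut or join described by $\Gamma$ while preserving $\gamma$, namely the vertex multiplicity $M_v \in \{1,2,4,w\}$. The total number of tuples is the product of these counts.

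The main obstacle will be verifying that this product equals $\tfrac{d!}{2^{W+B}}\cdot 2^{E}\cdot \prod_{i=1}^k w_i$. The factors of $w$ from bold merges (which occur at bold wieners and at bold in-forks, where Lemma~\ref{lem-vertexmult} gives multiplicity $w$) pair up nicely with the initial count: each bold in-fork contributes a factor $w$ at its merge vertex and a compensating factor $1/w$ at the initial stage (from counting $\gamma$ matching a bold pair of cycles), leaving only the weights of the bold wieners. The powers of $2$ are more delicate and require a local analysis: each even non-bold bounded edge, counted by $E$, must be traced to a factor of $2$ in some $M_v$ (via the cases yielding $M_v\in\{2,4\}$), while each wiener and each balanced fork must contribute a compensating $1/2$, coming either from the reduced ``fork'' multiplicity at a cut vertex or from the graph automorphism interchanging cycles of $\sigma$ matched to an in-fork. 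This reduces the proof to a vertex-local identity of the form $\sum_v \log_2 M_v = (\text{weighted edge count}) - W - B$, which can be checked by a case analysis of the seven non-bold-merge types in Lemma~\ref{lem-vertexmult}.
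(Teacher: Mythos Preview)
Your proposal is correct and follows essentially the same approach as the paper: count the initial pairs $(\sigma,\gamma)$ compatible with the in-end data, multiply by the vertex multiplicities of Lemma~\ref{lem-vertexmult}, observe that bold in-forks contribute a $1/w$ at the initial stage cancelling the $w$ from their merge vertex so only bold-wiener weights survive, and then collect the powers of $2$. The paper carries out the last step with slightly different bookkeeping---it replaces the ``indistinguishable'' multiplicities by the generic ones at the cost of a global factor $2^{-W-B_r}$, and then observes that every dashed/normal even edge incoming to a vertex contributes exactly one factor of $2$ (with the left-end ones cancelling against $2^{-E_l}$)---whereas you phrase the same verification as a vertex-local identity to be checked case by case; these are the same argument in different clothing.
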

\begin{proof}
 We only need to count how many possibilities we have to label a given monodromy graph with elements of the symmetric group. For the left ends, we can pick any permutation $\sigma$ having cycle type $\mu$; there are $\frac{d!}{\mu_1\cdots \mu_l \cdot |\Aut(\mu)|}$ such permutations. We have $\frac{|\Aut(\mu)|}{2^{B_l}}$ possibilities to choose which cycle to attach to which end, where $B_l$ denotes the number of left balanced forks.
Once we fix $\sigma$, how many possibilities are there to choose $\gamma$ satisfying $\gamma \circ \sigma \circ \gamma= \sigma^{-1}$? Of course, this number depends on $\mu$ only and not on the choice of $\sigma$.
For an odd cycle of length $\mu_i$, we have $\mu_i$ possibilities to choose the fixed number of the action of $\gamma$, by lemma \ref{lem-gamma}.
For an even cycle corresponding to a dashed or normal edge, we either have to pick two fixed numbers which are $\frac{\mu_j}{2}$ apart, or we have to pick two pairs of neighbors being exchanged which are again $\frac{\mu_j}{2}$ apart. In any case, the choice of one fixed number (resp.\ pair of neighbors) determines the other, so there are $\frac{\mu_j}{2}$ choices.
For a pair of bold ends of weight $\mu_k$, assume without loss of generality the two corresponding cycles are $(1\;\ldots\; \mu_k)$ and $(\mu_k+1\;\ldots\; 2\mu_k)$. Once we fix a number to be exchanged with $1$ in $\gamma$ --- for simplicity, let us here take $2\mu_k$ --- all other transpositions of $\gamma$ are also fixed by the condition. Here, we would get $\gamma=(1\; 2\mu_k)(2\; 2\mu_k-1) \cdots (\mu_k\; \mu_k+1)$. Thus we have $\mu_k$ choices for the pair.

Assume we have $s$ pairs of left bold ends, without loss of generality of weights $\mu_1,\ldots, \mu_{s}$, and assume we have $E_l$ dashed or normal even left ends.
If we divide the total number of choices of $\gamma$ by the product of all parts $\mu_1 \cdots \mu_l$, we get $\frac{1}{2}^{E_l}\cdot \frac{1}{\mu_1\cdots \mu_s}$.

For any vertex, we have to multiply by the numbers given in lemma \ref{lem-vertexmult}.
Note that for every vertex joining a pair of two bold ends, we obtain a corresponding factor of $\mu_i$, $1\leq i\leq s$, which cancels with the second factor above.

For some vertices, the multiplicities depend on the question whether the two outgoing edges are indistinguishable or not. They are indistinguishable if and only if they belong to a wiener or a balanced right fork.
We can thus multiply by $\frac{1}{2}^{W+B_r}$, where $B_r$ denotes the number of balanced right forks, and then add a factor of $2$ for any such vertex in exchange.
Note that after imposing this convention, we obtain a factor of $2$ for any dashed or normal even edge that is incoming for a vertex. For any dashed or normal even left end, these factor cancel with the $\frac{1}{2}^{E_l}$ from above, we are left with factors of $2$ for each bounded dashed or normal even edge. In addition, we have factors of $w_i$ for every wiener.
The statement follows.
\end{proof}

\begin{theorem}[Correspondence Theorem for real double Hurwitz numbers with real positive branch points, counted with real structure]\label{thm-corres}
 Algebro-geometric and tropical real double Hurwitz numbers with real positive branch points coincide, i.e.\ we have $$\tilde{H}_g(\mu,\nu)=\tilde{H}_g^{\trop}(\mu,\nu).$$
\end{theorem}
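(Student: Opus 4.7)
The plan is to assemble the three ingredients already established in the paper into a single chain of equalities. By Lemma \ref{lem-tuples}, we have
\[
\tilde{H}_g(\mu,\nu) \;=\; \frac{|\mathcal{T}|}{d!},
\]
where $\mathcal{T}$ is the set of tuples $(\gamma,\sigma,\tau_1,\ldots,\tau_r)$ satisfying the five conditions there. Construction \ref{const-monodromy} assigns to every such tuple a real monodromy graph $\Gamma$ (with its vertices totally ordered by the sequence $\tau_1,\ldots,\tau_r$), and we already know from the subsequent lemma that the output is indeed a real monodromy graph in the sense of Definition \ref{def-monodromygraph}. So the plan is to partition $\mathcal{T}$ according to the output graph and count each fiber.

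First, I would check that the assignment $\mathcal{T} \to \{\text{real monodromy graphs}\}$ of Construction \ref{const-monodromy} is surjective. This is essentially the content of Lemma \ref{lem-mult}: for any monodromy graph $\Gamma$ (with $W$ wieners, $B$ balanced forks, $E$ even dashed/normal bounded edges, and bold wieners of weights $w_1,\ldots,w_k$), there are exactly
\[
d!\cdot \frac{1}{2^{W+B}}\cdot 2^{E}\cdot \prod_{i=1}^{k} w_i \;=\; d!\cdot \tilde{m}(\Gamma)
\]
tuples in the fiber over $\Gamma$, and this number is strictly positive, so surjectivity is automatic. Note that the vertex orderings recorded by tuples match precisely the convention of summing over monodromy graphs with totally ordered vertices used in Equation \eqref{eq-mongraphs}.

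Combining these facts, I obtain
\[
\tilde{H}_g(\mu,\nu) \;=\; \frac{|\mathcal{T}|}{d!} \;=\; \frac{1}{d!}\sum_{\Gamma} \bigl(d!\cdot \tilde{m}(\Gamma)\bigr) \;=\; \sum_{\Gamma} \tilde{m}(\Gamma) \;=\; \tilde{H}_g^{\trop}(\mu,\nu),
\]
where the third sum runs over real monodromy graphs with totally ordered vertices; the final equality is exactly \eqref{eq-mongraphs}. Since essentially all the technical content has been packaged into Lemmas \ref{lem-tuples}, \ref{lem-vertexmult}, and \ref{lem-mult}, the proof of this theorem is a short bookkeeping argument.

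The main obstacle is not in assembling this chain, but in being careful about the two conventions for vertex orderings: the tuple side distinguishes the order in which transpositions are applied, while Remark \ref{rem-moncount} offers both an ordered and an unordered version of the tropical sum. I would therefore explicitly use the ordered version (Equation \eqref{eq-mongraphs}) to match the tuple count, rather than the version with the factor $o(\Gamma)$. A secondary subtlety worth flagging, even though it does not affect this theorem, is that the multiplicity $\tilde{m}(\Gamma)$ coincides with the factor from Lemma \ref{lem-mult} even on chains of wieners; this is the reason the correction terms of Definition \ref{def-realtrophurwitz}(1)--(2) appear only for $H_g^{\trop}$ and not for $\tilde{H}_g^{\trop}$, so no case distinction is needed here.
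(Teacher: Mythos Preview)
Your proposal is correct and follows essentially the same approach as the paper: both arguments combine Lemma~\ref{lem-tuples}, Construction~\ref{const-monodromy}, Lemma~\ref{lem-mult}, and Equation~\eqref{eq-mongraphs} into the chain $\tilde{H}_g(\mu,\nu)=|\mathcal{T}|/d!=\sum_\Gamma \tilde{m}(\Gamma)=\tilde{H}_g^{\trop}(\mu,\nu)$. Your additional remarks on surjectivity, the ordered-vertex convention, and why chains of wieners require no special treatment here are accurate and simply make explicit what the paper leaves implicit.
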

\begin{proof}
 By equation (\ref{eq-mongraphs}), we have $\tilde{H}_g^{\trop}(\mu,\nu)= \sum_\Gamma \tilde{m}(\Gamma)$ where the sum goes over all monodromy graphs. 
Denote the number of tuples that yield a given monodromy graph $\Gamma$ by $t_\Gamma$. Note that by lemma \ref{lem-mult} we have $t_\Gamma = d! \cdot \tilde{m}(\Gamma)$, so we get
\[
  \tilde{H}_g^{\trop}(\mu,\nu)=\frac{1}{d!} \sum_\Gamma t_\Gamma= \frac{t}{d!},
\]
where $t$ denotes the total number of tuples. By lemma \ref{lem-tuples}, the latter equals $\tilde{H}_g(\mu,\nu)$.
%
%For a monodromy graph without a marking of its ends, there are $\frac{|\Aut(\mu)|\cdot |\Aut(\nu)|}{2^{B_\Gamma}}$ ways to mark the ends, where $B_\Gamma$ denotes the number of balanced forks. So we have $\tilde{H}_g^{\trop}(\mu,\nu)=|\Aut(\mu)|\cdot |\Aut(\nu)| \sum_\Gamma \frac{1}{2^{B_\Gamma}}\tilde{m}(\Gamma)$, where now the sum goes over all monodromy graphs without marking of ends.
%Denote the number of tuples that yield a given monodromy graph $\Gamma$ without marking of ends by $t_\Gamma$. Note that by lemma \ref{lem-mult} we have $\tilde{m}(\Gamma)=t_\Gamma\cdot \frac{2^{B_{\Gamma}}}{d!}$, so we get
%$\tilde{H}_g^{\trop}(\mu,\nu)=\frac{|\Aut(\mu)|\cdot |\Aut(\nu)|}{d!} \sum_\Gamma t_\Gamma= \frac{|\Aut(\mu)|\cdot |\Aut(\nu)|}{d!}\cdot t$ where $t$ denotes the total number of tuples. By lemma \ref{lem-tuples}, the latter equals $\tilde{H}_g(\mu,\nu)$.
\end{proof}

The following lemmas are preparations for the correspondence theorem for real double Hurwitz numbers (not counted with real structures).
We now have to count tuples $(\sigma, \tau_1,\ldots,\tau_r)$ such that there exists an involution $\gamma$ satisfying the requirements of lemma \ref{lem-tuples}. We still use construction \ref{const-monodromy} to relate tuples to monodromy graphs,  but now we need to pay attention to tuples that only differ in the involution.

\begin{lemma}\label{lem-gammaunique}
 Fix a tuple $(\sigma,\tau_1,\ldots,\tau_r)$ and a suitable involution $\gamma$. Let $\Gamma$ be the monodromy graph obtained from this data as in construction \ref{const-monodromy}. Then there exists no other involution $\gamma'$ that together with $(\sigma,\tau_1,\ldots,\tau_r)$ yields $\Gamma$ (including the coloring).
\end{lemma}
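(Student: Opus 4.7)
The plan is to analyze the composition $\delta := \gamma' \circ \gamma \in \mathbb{S}_d$ and to show $\delta = \id$, which immediately yields $\gamma = \gamma'$. The argument proceeds in three steps.

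First I would establish that $\delta$ centralizes the subgroup $G := \langle \sigma, \tau_1, \ldots, \tau_r \rangle \leq \mathbb{S}_d$. Setting $\sigma_i := \tau_i \circ \cdots \circ \tau_1 \circ \sigma$, the identities $\gamma \sigma_i \gamma = \sigma_i^{-1}$ and (equivalently) $\gamma' \sigma_i^{-1} = \sigma_i \gamma'$ give $\delta \sigma_i = \gamma' \gamma \sigma_i = \gamma' \sigma_i^{-1} \gamma = \sigma_i \gamma' \gamma = \sigma_i \delta$. Since $\tau_i = \sigma_i \sigma_{i-1}^{-1}$, $\delta$ also commutes with each $\tau_i$. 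By condition~(d) of Lemma~\ref{lem-tuples}, $G$ acts transitively on $\{1, \ldots, d\}$, and the centralizer of a transitive subgroup of $\mathbb{S}_d$ acts freely on $\{1,\ldots,d\}$; hence $\delta$ is either the identity or has no fixed point, and it suffices to exhibit one $k$ with $\delta(k) = k$.

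Next I would exploit the agreement of colorings to show that $\delta$ preserves every cycle of every $\sigma_i$ setwise. For a cycle $c$ whose edge in $\Gamma$ is colored normal or dashed, both $\gamma$ and $\gamma'$ send $c$ to itself by the rules of Construction~\ref{const-monodromy}, so the same holds for $\delta$. For a bold edge, the wiener or balanced fork structure of $\Gamma$ pairs $c$ with a uniquely determined partner cycle $c^*$; since both $\gamma$ and $\gamma'$ produce this bold pairing, they both swap $c$ with $c^*$, and consequently $\delta(c) = \gamma'(c^*) = c$.

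Finally I would pin down a fixed point of $\delta$ using any transposition $\tau_j = (x\;y)$. The commutation $\delta \tau_j = \tau_j \delta$ forces $\delta$ to permute the set $\{x, y\}$, so either $\delta$ fixes both $x$ and $y$, or $\delta(x) = y$. Since $\tau_j$ is either a cut or a join, the points $x$ and $y$ lie in distinct cycles in at least one of $\sigma_{j-1}$ or $\sigma_j$; but on that side the previous step forces $\delta(x)$ to stay in the cycle of $x$, which does not contain $y$. This rules out $\delta(x) = y$, so $\delta(x) = x$, and we conclude $\delta = \id$. The step I expect to require the most care is the cycle-preservation argument for bold edges: one has to verify that $\Gamma$ records not only \emph{whether} an edge is bold but also \emph{which} cycle is its partner under $\gamma$ (this is precisely what the wiener/fork structure encodes), so that $\gamma$ and $\gamma'$ are forced to perform the same pairing rather than potentially different ones involving cycles of the same weight.
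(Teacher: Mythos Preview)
Your argument is correct and is genuinely different from the paper's proof. The paper proceeds by a direct case analysis: for each left end it looks at the adjacent $3$-valent vertex, runs through the possible vertex types (odd cut into odd$+$even, even cut into two bold, dashed even cut into two odd, normal even cut into two normal even, and the corresponding joins), and in each case shows that the transposition $\tau_i$ together with the color data pins down the fixed numbers or exchanged neighbor-pairs of $\gamma$ on every element of the support. Since the ends cover $\{1,\ldots,d\}$, this determines $\gamma$ globally.

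Your route is more structural: you observe that $\delta=\gamma'\gamma$ centralizes the transitive group $G=\langle\sigma,\tau_1,\ldots,\tau_r\rangle$, hence is either the identity or fixed-point free, and then manufacture a fixed point from any $\tau_j=(x\;y)$ by combining the commutation $\delta\{x,y\}=\{x,y\}$ with the fact that $\delta$ preserves each cycle of each $\sigma_i$ setwise. The last step, cycle preservation for bold edges, is exactly the point you flag as delicate; it is indeed justified because bold edges arise only inside wieners or balanced forks (as shown in the lemma following Construction~\ref{const-monodromy}), and the wiener/fork structure of the uncolored graph --- which depends only on $(\sigma,\tau_1,\ldots,\tau_r)$ --- singles out the partner cycle, so $\gamma$ and $\gamma'$ are forced to realize the same pairing. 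Your approach avoids the vertex-by-vertex casework and makes transparent why transitivity (condition (d) of Lemma~\ref{lem-tuples}) is the key hypothesis. The paper's approach, on the other hand, actually reconstructs the action of $\gamma$ on each cycle explicitly, and this concrete description is recycled in the proof of Lemma~\ref{lem-wienerchain}; your argument shows uniqueness without identifying $\gamma$, so that later lemma would need a separate computation.
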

\begin{proof}
 We have seen in the proof of lemma \ref{lem-mult} how many involutions we can choose for a given $\sigma$: for each odd cycle of length $k$, we have $k$ choices for the fixed number; for an even cycle corresponding to a dashed or normal edge we have $\frac{k}{2}$ choices of either a pair of fixed numbers or two pairs of exchanged neighbors; for a pair of bold edges of weight $k$, we have $k$ choices, given by fixing an image of one number in one of the cycles.
For any left end, consider its adjacent vertex, and assume it is the $i$th in the ordering of the vertices. The other edges adjacent to this vertex correspond in a unique way to cycles of $\tau_i\circ \cdots\circ \tau_1\circ \sigma$. Assume first the vertex cuts an odd edge into an odd and an even edge. Any possible involution $\gamma'$ needs to fix one number of the longer odd cycle, and accordingly the same number in the cut odd cycle. It needs to exchange two pairs of neighbors of the even cycle.
We assume without loss of generality that the odd cycle is $(1\;\ldots\; k)$ and that the transposition $\tau_i$ is $(1\; l)$ for some $l$. (Note that the other transpositions have no effect on this cycle, since the $i$th vertex is adjacent to this end.) Then the two cycles after cutting are $(1\;\ldots\; l-1)$ and $(l\;\ldots\; k)$. Assume without loss of generality that $(1\;\ldots\; l-1)$ is of odd length. The involution $\gamma'$ exchanges only two neighbors in $(1\;\ldots\; l-1)$, so the other pair of neighbors in $(l\;\ldots\; k)$ must be obtained from the cutting, i.e.\ they are $l$ and $k$. With that, the action of the involution both on $(l\;\ldots\; k)$ and on $(1\;\ldots\; k)$ is determined.
Assume now the vertex cuts an even edge into two bold edges. Without loss of generality, we can assume the even cycle is $(1\;\ldots\; 2k)$ and the transposition is $(1\;k+1)$, cutting it into $(1\;\ldots\; k)$ and $(k+1\;\ldots\; 2k)$. The involution $\gamma'$ needs to exchange two pairs of neighbors of $(1\;\ldots\; 2k)$ but cannot exchange any neighbors of $(1\;\ldots\; k)$ and $(k+1\;\ldots\; 2k)$. It follows that the two pairs of neighbors are $1$ and $2k$ and $k$ and $k+1$ and with that, $\gamma'=\gamma$ is fixed.
Let the vertex cut a dashed even edge into two odd edges. In each odd cycle, the involution exchanges two neighbors. In the even cycle, no neighbor is exchanged. So the neighbors have to appear in the cut. With that, the involution on the odd cycles and accordingly, also on the even cycle is fixed.
Assume the vertex cuts a normal even edge into two normal even edges. The involution exchanges two pairs of neighbors in each small even cycle, so altogether 4 pairs, but only two pairs in the big cycle. So, again, two such pairs must appear when cutting and $\gamma'$ is fixed.
Analogous arguments show that for a vertex joining two edges, the involved cycles fix the involution on the involved numbers. Since every left end is adjacent to a vertex, the involution on $\{1,\ldots,d\}$ is fixed. It follows that for a fixed monodromy graph and suitable tuple $(\sigma, \tau_1,\ldots,\tau_r)$ there is exactly one involution that satisfies the requirements.
\end{proof}

\begin{lemma}\label{lem-wienerchain}
Given a tuple $(\sigma,\tau_1,\ldots,\tau_r)$ and a suitable involution $\gamma$, if there is another involution $\gamma'\neq \gamma$ satisfying the requirements for $(\sigma,\tau_1,\ldots,\tau_r)$, then the monodromy graph $\Gamma$ corresponding to $(\sigma,\tau_1,\ldots,\tau_r)$ and $\gamma$ is a chain of wieners (see definition \ref{def-realtrophurwitz}).
\end{lemma}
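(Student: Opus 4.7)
My plan is to analyze the existence of a second involution $\gamma'$ via the centralizer of the transitive subgroup generated by $\sigma$ and the $\tau_i$. The key quantity is $\delta := \gamma\gamma'$. Setting $\sigma_0 := \sigma$ and $\sigma_i := \tau_i\circ\cdots\circ\tau_1\circ\sigma$, the conditions of Lemma \ref{lem-tuples} give $\gamma\sigma_i\gamma = \sigma_i^{-1} = \gamma'\sigma_i\gamma'$ for every $i$, which rearranges to $\delta\sigma_i = \sigma_i\delta$. Hence $\delta$ commutes with each $\sigma_i$ and, in particular, with every $\tau_i = \sigma_i\sigma_{i-1}^{-1}$, so $\delta$ centralizes $G := \langle\sigma,\tau_1,\ldots,\tau_r\rangle$. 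By condition (4) of Lemma \ref{lem-tuples}, $G$ acts transitively on $\{1,\ldots,d\}$, and a standard fact is that the centralizer of a transitive permutation group acts semiregularly. Therefore, if $\gamma \neq \gamma'$, then $\delta \neq \id$ must be fixed-point-free; equivalently, $\gamma$ and $\gamma'$ disagree on every element of $\{1,\ldots,d\}$.

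Next I would revisit the vertex-by-vertex analysis in the proof of Lemma \ref{lem-gammaunique}. That proof shows that, given the coloring of $\Gamma$, the involution $\gamma$ is uniquely pinned down by the cut/join data at each vertex. The task now is to determine which vertex types, if any, admit an alternative reconstruction under a change of coloring. Going through the seven cases of Lemma \ref{lem-vertexmult}, I expect the local combinatorics to be rigid for all non-wiener/fork vertices: at each such vertex, the pairs of exchanged neighbors or fixed points forced by the cut or join leave no room to replace $\gamma$ by $\gamma'$ without creating a fixed point of $\delta$ (contradicting semiregularity) or an inconsistency between $\sigma_{i-1}$ and $\sigma_i$. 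The only flexibility appears at the bold--bold--even vertices: at a wiener or balanced fork the two parallel weight-$k$ edges can be colored either bold--bold (so that $\gamma$ swaps the two corresponding cycles) or normal--normal (so that $\gamma$ inverts each cycle separately), and both choices are compatible with the same cut/join move on $\sigma_{i-1}$.

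Combining the two lines of argument, every inner vertex of $\Gamma$ at which $\gamma$ and $\gamma'$ differ locally must belong to a wiener or balanced fork. A short additional propagation argument then pushes this conclusion to all of $\Gamma$: once one wiener is recolored, the balancing condition, together with the fact that $\delta$ is fixed-point-free, forces the neighboring vertices along the graph to also be wieners or balanced forks, chained together via their shared even edges, until one reaches the two boundary partitions. The resulting skeleton is precisely a sequence of wieners, capped at each end by either a balanced fork (when the corresponding profile is $(d)$) or by two parallel ends of weight $d/2$ (when the profile is $(d/2,d/2)$), which matches Definition \ref{def-realtrophurwitz} of a chain of wieners.

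The main obstacle I anticipate is the local case analysis ruling out color changes at the non-wiener vertex types. In particular, the vertex where an even normal edge is cut into two even normal edges requires care, since a priori one might imagine redistributing the two fixed neighbor pairs of the parent cycle among the children in different ways; I would show that any such reshuffling would either force the two involutions to agree on some element (contradicting the semiregularity step) or violate the distance-$k/2$ constraint recorded in Lemma \ref{lem-gamma}. Once this and the analogous ``two even join into an even'' case are handled, the rest of the enumeration follows the same pattern as Lemma \ref{lem-gammaunique}.
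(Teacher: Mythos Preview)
Your approach is correct and takes a genuinely different route from the paper. The paper proceeds by analyzing which colorings an uncolored monodromy graph admits: it shows that unless $\Gamma$ is a chain of wieners there is always some vertex whose coloring is forced by parity, invokes the local uniqueness from Lemma~\ref{lem-gammaunique} at that vertex, and then appeals (somewhat tersely) to connectivity of $\Gamma$ to conclude that $\gamma$ is globally determined. Your centralizer argument replaces this propagation step cleanly: once you know that $\delta = \gamma\gamma'$ centralizes the transitive group $G$ and is therefore semiregular, any single point of agreement between $\gamma$ and $\gamma'$ forces $\gamma = \gamma'$ globally. Combined with the observation (used in both proofs) that at a non-wiener/fork vertex the coloring is forced by the edge parities and hence $\gamma$ is locally pinned down by the analysis in Lemma~\ref{lem-gammaunique}, this shows directly that \emph{every} vertex must be a wiener/fork vertex --- so in fact your ``short additional propagation argument'' is redundant, since semiregularity has already done that work. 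What remains is the purely graph-theoretic check that a connected $3$-valent monodromy graph all of whose vertices carry a wiener or balanced fork is a chain of wieners, which is straightforward. Two minor corrections: the local case analysis you flag as the main obstacle is essentially already carried out in Lemma~\ref{lem-gammaunique} (its argument works at any vertex, not only those adjacent to a left end), so this is less of a hurdle than you suggest; and your description of the end caps is reversed --- a balanced fork \emph{is} the pair of parallel ends of weight $d/2$ and corresponds to the profile $(d/2,d/2)$, while the profile $(d)$ is capped by a single end of weight $d$.
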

\begin{proof}
First note that the monodromy graph associated to  $(\sigma,\tau_1,\ldots,\tau_r)$ and $\gamma'$ differs from $\Gamma$ only in the colors. We therefore first assume we are given a graph without colors and discuss possibilities to color the edges.
Obviously, we can a priori color even wieners normal or bold.
If the graph contains an odd edge which is not part of a wiener or balanced fork, then its color has to be normal. All colors of adjacent edges are also determined, and since the graph is connected, the color for all edges, except for even wieners, is determined.
Assume that all the odd edges of the graph are part of a wiener or balanced fork, and pick such an odd edge.
%Assume the graph contains no odd edge which is not part of a wiener or balanced fork. Assume it contains an odd edge which is part of a wiener or balanced fork.
It has an adjacent even edge. If this even edge evolves into something different --- is either cut into two cycles of different lengths, or joined to another (necessarily even) edge --- its color is determined and with that also the color of the odd wiener or balanced fork. We only have two options for coloring if the even edge evolves into a chain of wieners: either we color each even edge normal and each odd edge bold, or each odd edge normal and each even edge dashed.
If the graph has no odd edge, then the color of each edge, except wieners which can be normal or bold, is determined.
Now assume we have a graph with even wieners but which is not a chain of wieners. It contains a vertex for which the colors of the adjacent edges are determined.
We have seen in the proof of lemma \ref{lem-gammaunique} that for any such vertex the involution $\gamma$ is determined by the tuple $(\sigma,\tau_1,\ldots,\tau_r)$. Since $\Gamma$ is connected, the only possibility to get different involutions $\gamma'$ arises when $\Gamma$ is a chain of wieners.
\end{proof}

 \begin{lemma}\label{lem-oddwienerchain}
Let $d\equiv 2 \;\mod 4$, $\mu,\nu\in\{d, (\frac{d}{2}\frac{d}{2})\} $ and $(\sigma,\tau_1,\ldots,\tau_r)$ a tuple with suitable involution $\gamma$ such that the monodromy graph for the pair is a chain of wieners as in definition \ref{def-realtrophurwitz}(a).
Then there exists $\gamma'$ satisfying the requirements such that in the monodromy graph, every edge that used to be dashed is now normal and every edge that used to be normal is now bold.
 \end{lemma}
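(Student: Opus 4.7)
The plan is to construct $\gamma'$ explicitly as $\gamma' = \delta\gamma$, where $\delta \in \mathbb{S}_d$ is a fixed-point-free involution commuting simultaneously with each $\sigma_i := \tau_i \circ \cdots \circ \tau_1 \circ \sigma$. Granted such a $\delta$, the identity $(\gamma')^2 = \delta\gamma\delta\gamma$ collapses to the identity because $\gamma\delta\gamma = \delta^{-1} = \delta$ (whenever $\delta$ can locally be expressed as a power of $\sigma_i$, the relation $\gamma\sigma_i\gamma = \sigma_i^{-1}$ gives $\gamma\delta\gamma = \delta^{-1}$, which equals $\delta$ since $\delta$ is an involution). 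Similarly, $\gamma'\sigma_i\gamma' = \delta(\gamma\sigma_i\gamma)\delta = \delta\sigma_i^{-1}\delta = \sigma_i^{-1}$, so the tuple requirements of Lemma~\ref{lem-tuples} are satisfied by $\gamma'$.

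To define $\delta$, I use that for $d \equiv 2 \pmod 4$ and a chain of wieners, every $\sigma_i$ has cycle type in $\{(d),(\tfrac{d}{2},\tfrac{d}{2})\}$. On a single $d$-cycle, the element $\sigma_i^{d/2}$ is a genuine fixed-point-free involution (pairing antipodal points of the cycle). I define $\delta := \sigma_i^{d/2}$ at any index where $\sigma_i$ has type $(d)$, and extend to the remaining indices as the natural swap exchanging the two $\tfrac{d}{2}$-cycles. Consistency along the chain follows from the observation that any cut-transposition $\tau_{i+1} = (a,b)$ which splits a $d$-cycle $\sigma_i$ into two $\tfrac{d}{2}$-cycles must have $b = \sigma_i^{d/2}(a)$ (otherwise the resulting cycles would not both have length $\tfrac{d}{2}$); in particular $\tau_{i+1}$ is a factor of $\delta$ and commutes with it. The symmetric argument applies to join-transpositions, and induction along the chain yields a well-defined $\delta$ commuting with every $\sigma_i$.

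Finally, I will verify that the color change induced by passing from $\gamma$ to $\gamma' = \delta\gamma$ is exactly dashed $\to$ normal and normal $\to$ bold, using Lemma~\ref{lem-gamma} to read the color off the action of the involution on each cycle. For a $\sigma_i$-cycle of length $d$ which was dashed under $\gamma$, the involution $\gamma$ has two fixed points at cyclic distance $\tfrac{d}{2}$; composing with $\delta = \sigma_i^{d/2}$ moves these to their antipodes, so $\gamma'$ has no fixed points on this cycle and the edge becomes normal. For a pair of $\sigma_i$-cycles of length $\tfrac{d}{2}$ normal under $\gamma$ (each self-inverted with one fixed point), $\delta$ swaps the two cycles, hence $\gamma' = \delta\gamma$ swaps them as well, and the wiener edges become bold.

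The main obstacle will be checking that $\delta$ is a \emph{single} element commuting with all of $\sigma, \sigma_1, \ldots, \sigma_r$ simultaneously, rather than an ad-hoc choice at each index. This relies crucially on both the rigid combinatorial structure of a chain of wieners in case~(a) (which forces each $\tau_i$ to be a transposition of two $\delta$-paired elements) and the number-theoretic hypothesis $d \equiv 2 \pmod 4$ (which ensures $\sigma^{d/2}$ is a nontrivial fixed-point-free involution on a $d$-cycle); the latter is precisely what distinguishes case~(a) from case~(b).
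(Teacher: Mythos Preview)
Your overall strategy --- setting $\gamma' = \delta\gamma$ for a fixed-point-free involution $\delta$ centralizing every $\sigma_i$ --- is sound, and the verification of the colour change in the last paragraph is correct. However, the consistency argument for $\delta$ has a genuine gap at the \emph{join} steps. For a cut $\tau_{i+1}=(a,b)$ of a $d$-cycle into two $\tfrac d2$-cycles you are right that the cycle lengths alone force $b=\sigma_i^{d/2}(a)$, so $\tau_{i+1}$ is a factor of $\delta$ and hence commutes with it. The ``symmetric argument'' you invoke for joins does not go through: \emph{any} transposition with one entry in each $\tfrac d2$-cycle of $\sigma_{i+1}$ produces a $d$-cycle $\sigma_{i+2}$, so the cycle structure puts no constraint on $\tau_{i+2}$. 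Concretely, with $\sigma_{i+1}=(1\,\ldots\,k)(k{+}1\,\ldots\,2k)$ and $\delta=(1\;k{+}1)\cdots(k\;2k)$, the join $\tau_{i+2}=(1\;k{+}2)$ is perfectly admissible combinatorially but yields $\sigma_{i+2}^{d/2}\neq\delta$. Thus your candidate $\delta$ need not commute with $\sigma_{i+2}$, and ``induction along the chain'' breaks.

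What saves the situation is precisely the hypothesis you never use: the given involution $\gamma$ and the specific colouring~(a). Because $\delta$ commutes with both $\gamma$ and $\sigma_{i+1}$, it permutes the $\gamma$-fixed points, hence swaps the unique fixed point $p_1$ in one $k$-cycle with the unique fixed point $p_2$ in the other. The dashed-join rule of Lemma~\ref{lem-vertexmult} then forces $\tau_{i+2}=(\sigma_{i+1}^{-(k-1)/2}(p_1),\,\sigma_{i+1}^{-(k-1)/2}(p_2))$, which is visibly $\delta$-invariant. With this additional step your argument becomes complete. By contrast, the paper's proof is more bare-handed: after normalizing $\sigma$ and $\gamma$ it observes that the colouring~(a) forces $\tau_1=\cdots=\tau_r=(1\;k{+}1)$ identically (so $\sigma_{i+2}=\sigma_i$ and consistency is automatic), and then simply writes down $\gamma'=(1\;2k)(2\;2k{-}1)\cdots(k\;k{+}1)$. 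Your formulation $\gamma'=\delta\gamma$ with $\delta=\sigma^{d/2}$ is exactly this element, reached by a more conceptual but, as written, incomplete route.
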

\begin{proof}
Denote $k=\frac{d}{2}$. Without loss of generality, we can assume that $$\sigma=(1\;\ldots\; 2k)\mbox{  or  }(1 \;\ldots\; k) (k+1 \;\ldots\; 2k), \mbox{  and that}$$  $$\gamma= (1\; k) (2\; k-1) \;\ldots\; (\frac{k-1}{2}\;\frac{k+1}{2}) (k+1\; 2k)(k+2\; 2k-1)\cdots (\frac{2k-1}{2}\;\frac{2k+1}{2}).$$ It follows that $\tau_1=\cdots =\tau_r=(1\; k+1)$. In fact, the multiplicity of this graph is one (or two if there are two balanced forks) and this is the only tuple we have to consider. If we now set $\gamma'= (1\; 2k)(2 \; 2k-1)\cdots (k\; k+1)$ then the tuple together with $\gamma'$ yields the graph with every edge that used to be dashed normal and every edge that used to be normal bold.
\end{proof}

\begin{lemma}\label{lem-complementary}
Let $d\equiv 0 \;\mod 4$, $\mu,\nu\in\{d, (\frac{d}{2}\frac{d}{2})\} $. Let $(\sigma,\tau_1,\ldots,\tau_r)$ be a tuple and $\gamma$ a suitable involution, such that the monodromy graph for the pair is a chain of wieners as in definition \ref{def-realtrophurwitz}(b). If $\gamma'$ is another suitable involution, then the monodromy graph is of complementary color.
\end{lemma}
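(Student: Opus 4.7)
The plan is to show that whenever $\gamma$ and $\gamma'$ are two distinct suitable involutions for the same tuple $(\sigma,\tau_1,\ldots,\tau_r)$, they are related by $\gamma' = \delta\gamma$ for a specific central involution $\delta$, and that this forces the coloring of the monodromy graph $\Gamma'$ associated to $\gamma'$ to be complementary to that of $\Gamma$ at every wiener and balanced fork.

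First I would consider the product $\epsilon := \gamma\gamma'$. From $\gamma\rho\gamma = \rho^{-1} = \gamma'\rho\gamma'$ applied to $\rho = \sigma$ and to each $\rho_i := \tau_i \circ \cdots \circ \tau_1 \circ \sigma$, one obtains $\epsilon\rho\epsilon^{-1} = \rho$. Thus $\epsilon$ centralizes $\sigma$, every $\rho_i$, and therefore every $\tau_i = \rho_i\rho_{i-1}^{-1}$, so $\epsilon$ lies in the centralizer $Z(G) \subseteq \mathbb{S}_d$ of the subgroup $G := \langle \sigma,\tau_1,\ldots,\tau_r\rangle$.

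Next I would identify $Z(G)$ explicitly. Up to relabeling, $\sigma$ is either $(1\;\ldots\;d)$ or $(1\;\ldots\;d/2)(d/2+1\;\ldots\;d)$, and the existence of a second suitable involution already forces every $\tau_i$ to be of the form $(a,a+d/2)$, since these are precisely the transpositions commuting with the candidate central involution $\delta : j \mapsto j + d/2 \pmod d$ (which equals $\sigma^{d/2}$ in the single-cycle case). A direct computation inside $Z_{\mathbb{S}_d}(\sigma)$ then gives $Z(G) = \{1, \delta\}$. Since $\gamma \neq \gamma'$, we conclude $\epsilon = \delta$ and $\gamma' = \delta\gamma$.

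Finally I would analyze the effect of $\delta$ on the coloring. At every wiener and balanced fork, the two cycles of length $d/2$ meeting the vertex have supports $S, S'$ which partition $\{1,\ldots,d\}$ into the two $\delta$-blocks; I would prove this by induction on the position in the chain, using that each intermediate $\rho_i$ lies in $Z(\delta)$ and that every $d$-cycle state $\alpha$ appearing in the chain satisfies $\alpha^{d/2} = \delta$ (so any cut by $(a, a+d/2)$ produces two $d/2$-cycles whose supports are swapped by $\delta$). Given this, the vertex is bold under $\gamma$ if and only if $\gamma$ swaps $S$ and $S'$, and bold under $\gamma' = \delta\gamma$ if and only if $\delta\gamma$ swaps them, equivalently if and only if $\gamma$ preserves them. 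Hence the two colorings are exchanged at every wiener and balanced fork, giving the complementary coloring. The main obstacle I anticipate is the propagation argument in this last step: one needs to confirm that the $\delta$-swapped structure of the wiener supports persists throughout the entire chain, which is where the rigid form of the $\tau_i$'s in a chain of wieners plays an essential role.
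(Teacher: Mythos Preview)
Your approach is genuinely different from the paper's and more structural. The paper gives a short local argument: for each bounded weight-$d$ edge, with adjacent transpositions $\tau_j$ (the join on its left) and $\tau_{j+1}$ (the cut on its right), one checks directly that the two neighboring wieners/forks carry the same color if and only if $\tau_j=\tau_{j+1}$. Hence the pattern of ``color changes'' along the chain is determined by the tuple $(\sigma,\tau_1,\ldots,\tau_r)$ alone, so any two admissible colorings differ by a global flip, i.e.\ are complementary. No centralizer computation is needed.

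Your centralizer route can be made rigorous, but step~2 as written has a gap. The sentence ``the existence of a second suitable involution already forces every $\tau_i$ to be of the form $(a,a+d/2)$, since these are precisely the transpositions commuting with~$\delta$'' is circular: it uses $\epsilon=\delta$ to constrain the $\tau_i$, which is what you still have to prove. The clean argument runs in the other direction. From $\epsilon:=\gamma\gamma'\in Z(G)\setminus\{1\}$ and transitivity of $G$ you get that $\epsilon$ is fixed-point-free, hence swaps the two letters of each $\tau_i$. Now pick a $d$-cycle state $\rho$ among $\sigma,\rho_1,\ldots,\rho_r$ (one always exists in a chain of wieners); then $\epsilon\in C_{\mathbb S_d}(\rho)=\langle\rho\rangle$, and since the adjacent transposition cuts $\rho$ into two halves of equal length, the swap condition forces $\epsilon=\rho^{d/2}$. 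This covers in particular the case $\mu=(\tfrac d2,\tfrac d2)$, where $C_{\mathbb S_d}(\sigma)$ is a wreath product rather than cyclic and your ``direct computation inside $Z_{\mathbb S_d}(\sigma)$'' does not by itself give $\{1,\delta\}$; your fixed formula $\delta:j\mapsto j+d/2$ is only meaningful after such a relabeling. Once $\epsilon=\rho^{d/2}$ is established, your step~3 goes through, and in fact the ``main obstacle'' you flag (propagation of the $\delta$-swap along the chain) is then automatic: $\epsilon$ centralizes every state and is fixed-point-free, so at each wiener it must swap the two $d/2$-supports.
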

\begin{proof}
Consider a bounded edge of weight $d$ and its two adjacent vertices. If the color of the two edges to the left is equal to the color of the two edges to the right, the two corresponding transpositions have to be equal. If the color of the two edges to the left is different from the color of the two on the right, the two transpositions have to be different. It follows that we can have the same tuple $(\sigma,\tau_1,\ldots,\tau_r)$ only if the sequence of color changes coincides, which is the case only for graphs of complementary colors.
\end{proof}
\begin{lemma}\label{lem-overcount}
Let $d\equiv 0 \;\mod 4$, $\mu,\nu\in\{d, (\frac{d}{2}\frac{d}{2})\} $.
Let $\Gamma$ and $\Gamma'$ be two complementary colored chains of wieners as in definition \ref{def-realtrophurwitz}(b). Let $M_{\Gamma}$ resp.\ $M_{\Gamma'}$ denote the set of tuples $(\sigma,\tau_1,\ldots,\tau_r)$ such that there exists an involution $\gamma$ satisfying the requirements such that the monodromy graph construction \ref{const-monodromy} yields $\Gamma$ resp.\ $\Gamma'$. Then $\#M_{\Gamma}\cap M_{\Gamma'}=\frac{d!}{2}\cdot 2^W$ where $W$ denotes the number of wieners.
\end{lemma}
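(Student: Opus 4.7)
The plan is to identify the tuples in $M_\Gamma\cap M_{\Gamma'}$ with those admitting two distinct compatible involutions, and to count these by a direct analysis of the chain of wieners.

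By Lemma \ref{lem-gammaunique}, a tuple in $M_\Gamma$ determines uniquely the involution $\gamma$ realizing the coloring $\Gamma$. Hence a tuple $(\sigma,\tau_1,\ldots,\tau_r)$ lies in $M_\Gamma\cap M_{\Gamma'}$ if and only if it admits at least two distinct compatible involutions; by Lemma \ref{lem-complementary}, any such second involution automatically produces the complementary coloring. Counting $M_\Gamma\cap M_{\Gamma'}$ thus reduces to counting tuples that admit a second compatible involution beyond $\gamma$.

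I would carry out this count one wiener at a time. Along the chain, the cycle type of $\sigma_i$ alternates between $d$ and $(\frac{d}{2},\frac{d}{2})$; at each wiener the involution either swaps the two $\frac{d}{2}$-cycles of the intermediate $\sigma_i$ (the bold case) or inverts each of them separately (the non-bold case), and the complementary involution does the opposite. Using the cycle descriptions of Lemma \ref{lem-gamma} and the vertex analysis of Lemma \ref{lem-vertexmult}, one reads off the compatibility condition on the splitter and joiner transpositions $(\tau_i,\tau_{i+1})$ at each wiener. The hypothesis $d\equiv 0 \mod 4$ makes the $\frac{d}{2}$-cycles of even length and therefore admits involutions with either $0$ or $2$ fixed points --- exactly the flexibility needed for both the bold and non-bold actions to coexist --- and a careful enumeration shows that this simultaneous compatibility singles out a factor of two of freedom per wiener.

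Assembling the pieces, the final count $\frac{d!}{2}\cdot 2^W$ should emerge from $\frac{d!}{2}$ choices of initial data (the permutation $\sigma$ together with the start of the involution $\gamma$ on the first cycle, modulo the $\mathbb{Z}/2$-symmetry intrinsic to a doubly-compatible tuple) combined with $2^W$ independent wiener choices. The main obstacle is the careful bookkeeping of these local contributions, especially handling the variants $\mu,\nu\in\{d,(\frac{d}{2},\frac{d}{2})\}$ and the associated balanced forks at the ends. A particularly noteworthy point is that the final count turns out to be independent of the specific choice of complementary pair $\{\Gamma,\Gamma'\}$, even though the individual sizes $|M_\Gamma|=d!\,\tilde{m}(\Gamma)$ and $|M_{\Gamma'}|=d!\,\tilde{m}(\Gamma')$ (by Lemma \ref{lem-mult} combined with Lemma \ref{lem-gammaunique}) depend strongly on the coloring; this uniformity serves as the cleanest sanity check for the correctness of the local analysis.
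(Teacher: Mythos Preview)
Your overall strategy matches the paper's: count pairs $(\sigma,\tau_1,\ldots,\tau_r)$ that are simultaneously compatible with both colorings by processing the chain vertex by vertex, obtaining a factor $\tfrac{d!}{2}$ from the initial data and a factor $2$ from each wiener. However, what you have written is a plan, not a proof, and the two points you defer are precisely the content of the lemma.

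First, your description of the $\tfrac{d!}{2}$ factor as ``$\sigma$ together with the start of $\gamma$, modulo a $\mathbb{Z}/2$-symmetry'' is not how the count actually goes. In the paper's argument one simply counts pairs $(\sigma,\tau_1)$: if the chain begins with a balanced fork there are $\tfrac{(2k)!}{k\cdot k\cdot 2}$ choices of $\sigma$ and $k^2$ choices of $\tau_1$; if it begins with a $d$-edge there are $(2k-1)!$ choices of $\sigma$ and $k$ choices of $\tau_1$; in either case the product is $\tfrac{d!}{2}$. The involution $\gamma$ is then \emph{determined} by $(\sigma,\tau_1)$ and the coloring via Lemma~\ref{lem-gammaunique}, so it should not appear as an independent piece of ``initial data''.

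Second, the sentence ``a careful enumeration shows that this simultaneous compatibility singles out a factor of two of freedom per wiener'' is the whole lemma, and you have not carried it out. The paper's proof normalizes so that at a cut one has the $2k$-cycle $(1\;\ldots\;2k)$ and the two candidate involutions are $(1\;2k)\cdots(k\;k+1)$ (normal) and $(1\;k)\cdots(\tfrac{k}{2}\;\tfrac{k}{2}+1)(k+1\;2k)\cdots(\tfrac{3k}{2}\;\tfrac{3k}{2}+1)$ (bold); in both cases the unique admissible cutting transposition is $(1\;k{+}1)$, giving factor~$1$. At a join, with the two $k$-cycles $(1\ldots k)(k{+}1\ldots 2k)$, the four transpositions compatible with the normal coloring are $(1\;k{+}1)$, $(1\;\tfrac{3k}{2}{+}1)$, $(\tfrac{k}{2}{+}1\;k{+}1)$, $(\tfrac{k}{2}{+}1\;\tfrac{3k}{2}{+}1)$, while the $k$ transpositions compatible with the bold coloring are $(1\;k{+}1),(2\;2k),\ldots,(k\;k{+}2)$; their intersection is $\{(1\;k{+}1),(\tfrac{k}{2}{+}1\;\tfrac{3k}{2}{+}1)\}$, of size exactly~$2$ (and this is where $d\equiv 0\bmod 4$ is used, since $\tfrac{k}{2}$ must be an integer). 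Since each wiener contains one join, this yields $2^W$. Without this explicit computation, your argument does not establish the claimed factor.
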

\begin{proof}
Denote $k=\frac{d}{2}$. Consider a permutation $\sigma$ and a transposition $\tau_1$ corresponding to the first vertex. For each coloring of the graph, we obtain a unique involution satisfying the requirements for $\sigma$ and $\tau_1$ by lemma \ref{lem-gammaunique}. If $\Gamma$ and $\Gamma'$ start with a balanced fork, there are $\frac{(2k)!}{k\cdot k\cdot 2}$ possible $\sigma$ and $k\cdot k$ possible $\tau_1$.
If $\Gamma$ and $\Gamma'$ start with an edge of weight $2k$, there are $(2k-1)!$ possible $\sigma$ and $k$ possible $\tau_1$. In any case, we have $\frac{d!}{2}$ choices for the pair $(\sigma, \tau_1)$.
Once $(\sigma,\tau_1)$ is fixed and with it, the involution $\gamma$, we can refer to our cut and join vertex multiplicities (see lemma \ref{lem-vertexmult}) to determine the number of $\tau_i$,$i=2,\ldots,r$. Since for any of the cutting vertices, the two following edges are indistinguishable, we get $1$ for the cutting vertices. We get $4$ for vertices joining two normal edges and $k$ for vertices joining two bold edges.

Now consider the following vertices one after the other. First consider the case of a cut. We can again without loss of generality assume that the permutation to cut is $(1\;\ldots\; 2k)$ and the two possible involutions corresponding to the different colorings are $(1\; k)(2\;k-1)\cdots (\frac{k}{2}\;\frac{k}{2}+1)(k+1\;2k)(k+2\;2k-1)\cdots(\frac{3k}{2}\;\frac{3k}{2}+1) $ (for two bold edges) resp.\ $(1 \;2k)(2\;2k-1)\cdots (k\;k+1)$ (for two normal edges). We can see that in each case, we need to cut with $\tau_i=(1\; k+1)$.
For any vertex joining two edges, we can again without loss of generality assume that the permutation with two cycles to join is $(1\;\ldots\; k)(k+1\;\ldots\; 2k)$ and the two possible involutions are $(1\; k)(2\;k-1)\cdots (\frac{k}{2}\;\frac{k}{2}+1)(k+1\;2k)(k+2\;2k-1)\cdots(\frac{3k}{2}\;\frac{3k}{2}+1) $ (for two bold edges) resp.\ $(1 \;2k)(2\;2k-1)\cdots (k\;k+1)$ (for two normal edges).
If the two edges are normal, we can join with $(1 \;k+1)$, $(1\; \frac{3k}{2}+1)$, $(\frac{k}{2}+1\;k+1)$ and $(\frac{k}{2}+1\;\frac{3k}{2}+1)$.
If the two edges are bold, we can join with $(1 \;k+1)$, $(2\; 2k)$, $(3\; 2k-1)$, \dots, $(k\; k+2)$.
There are two transpositions that appear in both cases, namely $(1\; k+1)$ and $(\frac{k}{2}+1\;\frac{3k}{2}+1)$.

Since there is one vertex joining two edges for every wiener in the graph, in total we obtain
$\frac{d!}{2}\cdot 2^W$ tuples that appear in the intersection.
\end{proof}

We are now ready to prove the correspondence theorem:
\begin{theorem}[Correspondence Theorem for real double Hurwitz numbers with real positive branch points (counted without real structure)]\label{thm-corres2}
 Algebro-geometric and tropical real double Hurwitz numbers with real positive branch points coincide, i.e.\ we have $$H_g(\mu,\nu)=H_g^{\trop}(\mu,\nu).$$
\end{theorem}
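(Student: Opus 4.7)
The plan is to adapt the argument of Theorem~\ref{thm-corres}, this time keeping careful track of tuples $(\sigma,\tau_1,\ldots,\tau_r)$ which may admit more than one suitable involution~$\gamma$. By Lemma~\ref{lem-tuples}, $d!\cdot H_g(\mu,\nu)=t'$ where $t'$ counts the number of such tuples admitting at least one valid~$\gamma$. For each monodromy graph $\Gamma$, let $M_\Gamma$ denote the set of tuples contributing to $t'$ which yield $\Gamma$ via Construction~\ref{const-monodromy} for some valid~$\gamma$. Lemma~\ref{lem-gammaunique} ensures the involution producing a given $\Gamma$ from such a tuple is unique, so Lemma~\ref{lem-mult} gives $|M_\Gamma|=d!\cdot\tilde{m}(\Gamma)$. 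Hence $t'=\bigl|\bigcup_\Gamma M_\Gamma\bigr|$, while Theorem~\ref{thm-corres} gives $\sum_\Gamma |M_\Gamma|=d!\cdot\tilde{H}_g^{\trop}(\mu,\nu)$. By Lemma~\ref{lem-wienerchain}, the sets $M_\Gamma$ and $M_{\Gamma'}$ for $\Gamma\ne\Gamma'$ can overlap only when both are chains of wieners, so the problem reduces to quantifying the overcount occurring on chains.

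If $\mu$ and $\nu$ are not both in $\{d,(d/2,d/2)\}$, no chains of wieners exist, the $M_\Gamma$ are pairwise disjoint, and $m(\Gamma)=\tilde{m}(\Gamma)$ throughout, so immediately $H_g^{\trop}(\mu,\nu)=\tilde{H}_g^{\trop}(\mu,\nu)=t'/d!=H_g(\mu,\nu)$. Otherwise $\mu,\nu\in\{d,(d/2,d/2)\}$ and I would split by the residue of $d$ modulo $4$ parallel to Definition~\ref{def-realtrophurwitz}. In the case $d\equiv 2 \mod 4$, Lemma~\ref{lem-oddwienerchain} provides, for each chain $\Gamma$ of the type in~(a), a unique partner $\Gamma'$ (dashed turned normal, normal turned bold) with $M_\Gamma=M_{\Gamma'}$, and each tuple in this common set admits exactly two suitable involutions (one per colouring, by Lemma~\ref{lem-gammaunique}). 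The overcount in $\sum_\Gamma|M_\Gamma|$ relative to $t'$ is therefore $|M_\Gamma|$ per such pair. Definition~\ref{def-realtrophurwitz}(a) declares $m(\Gamma)=0$ and leaves $m(\Gamma')=\tilde{m}(\Gamma')=\tilde{m}(\Gamma)$, the last equality being forced by $|M_\Gamma|=|M_{\Gamma'}|$; this precisely compensates the overcount.

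In the case $d\equiv 0\mod 4$, Lemma~\ref{lem-complementary} shows that every tuple with two valid involutions lies in $M_\Gamma\cap M_{\Gamma'}$ for a complementary pair $(\Gamma,\Gamma')$; combined with Lemma~\ref{lem-gammaunique} it also shows that no tuple admits more than two involutions (since ``complementary'' is involutive, a third valid involution would force $\Gamma'=\Gamma''$ and then $\gamma'=\gamma''$). Lemma~\ref{lem-overcount} evaluates $|M_\Gamma\cap M_{\Gamma'}|=d!\cdot 2^{W-1}$, so the overcount per complementary pair is exactly $d!\cdot 2^{W-1}$, matching the correction $m(\Gamma_1)+m(\Gamma_2)=\tilde{m}(\Gamma_1)+\tilde{m}(\Gamma_2)-2^{W-1}$ built into Definition~\ref{def-realtrophurwitz}(b). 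Summing over all monodromy graphs then yields $d!\cdot H_g^{\trop}(\mu,\nu)=\sum_\Gamma|M_\Gamma|-(\text{total overcount})=t'=d!\cdot H_g(\mu,\nu)$.

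The main step requiring care is the bookkeeping in case~(a): one must check that the two paired chains are genuinely distinct monodromy graphs (so that both appear in the tropical sum $\sum_\Gamma$) yet that $M_\Gamma=M_{\Gamma'}$, so that assigning zero multiplicity to one chain and full $\tilde{m}$ to the other really produces the single correct count of their shared tuples. The analogous verification in case~(b) is more transparent because Lemma~\ref{lem-overcount} already supplies the exact cardinality of the intersection.
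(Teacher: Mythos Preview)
Your argument follows the paper's own route closely (reduce to counting tuples, invoke Lemmas~\ref{lem-gammaunique}--\ref{lem-overcount}, and match the corrections built into Definition~\ref{def-realtrophurwitz}), and the case $d\equiv 0\pmod 4$ is handled correctly. There is, however, a genuine slip in your treatment of the case $d\equiv 2\pmod 4$.

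You assert that for the dashed chain $\Gamma$ and its partner $\Gamma'$ one has $M_\Gamma=M_{\Gamma'}$, and you deduce $\tilde m(\Gamma')=\tilde m(\Gamma)$. Both claims are false. Take for instance $\mu=\nu=(d)$ with $d=2k$, $k$ odd, and genus $g$: the dashed chain $\Gamma$ has $\tilde m(\Gamma)=\tfrac12$, while the bold-wiener chain $\Gamma'$ has $\tilde m(\Gamma')=\tfrac{k^g}{2}$, so $|M_\Gamma|\ne|M_{\Gamma'}|$ unless $d=2$. What Lemma~\ref{lem-oddwienerchain} actually gives is only the inclusion $M_\Gamma\subseteq M_{\Gamma'}$: every tuple producing the dashed coloring also produces the bold one, but not conversely. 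Fortunately this weaker statement is exactly what is needed. Since $M_\Gamma\subseteq M_{\Gamma'}$, the union has size $|M_\Gamma\cup M_{\Gamma'}|=|M_{\Gamma'}|=d!\,\tilde m(\Gamma')$, while Definition~\ref{def-realtrophurwitz}(a) sets $m(\Gamma)+m(\Gamma')=0+\tilde m(\Gamma')$, so the tropical contribution matches the count of tuples on the nose. In other words, the overcount equals $|M_\Gamma\cap M_{\Gamma'}|=|M_\Gamma|=d!\,\tilde m(\Gamma)$, and zeroing out $m(\Gamma)$ removes precisely this amount. If you replace the equality $M_\Gamma=M_{\Gamma'}$ by the inclusion $M_\Gamma\subseteq M_{\Gamma'}$ and drop the false claim $\tilde m(\Gamma')=\tilde m(\Gamma)$, the rest of your bookkeeping goes through and matches the paper's proof.
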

\begin{proof}
 The proof is exactly along the lines of the proof of the correspondence theorem \ref{thm-corres}.
It follows from lemma \ref{lem-gammaunique} and lemma \ref{lem-wienerchain} that the only graphs for which the count with real structure and without differs are chains of wieners, which can only happen if $\mu,\nu\in \{d, (\frac{d}{2},\frac{d}{2})\}$.
%If $d\equiv 2 \;\mod 4$, then by lemma \ref{lem-oddwienerchain}, we can neglect the wiener chain.
If $d\equiv 2 \;\mod 4$, then by lemma \ref{lem-oddwienerchain}, we can neglect the dashed wiener chain from definition \ref{def-realtrophurwitz}(a).
If $d\equiv 0 \;\mod 4$, then by lemma \ref{lem-complementary} we have to subtract from the multiplicities $\tilde{m}(\Gamma_1)+\tilde{m}(\Gamma_2)$ of a pair of chains of wieners of complementary color, to account for tuples $(\sigma,\tau_1,\ldots,\tau_r)$ that appear for both.
It follows from lemma \ref{lem-overcount} that we need to subtract $2^{W-1}$, where $W$ denotes the number of wieners.
 %and $B$ the number of balanced forks, taking into account the relation $\tilde{m}(\Gamma)=t_\Gamma\cdot \frac{2^{B}}{d!} = \frac{d!\cdot 2^{W+B}}{2}$, where $t_\Gamma$ denotes the number of tuples yielding the monodromy graph $\Gamma$.
\end{proof}

Using the tropical approach, we can easily deduce the following nice conclusion about the relation of real double Hurwitz numbers counted with or without real structures:
\begin{corollary}\label{cor-tildeh}
 If $\mu, \nu\in\{d, (\frac{d}{2},\frac{d}{2})\} $ and $d\equiv 2 \;\mod 4$, then we have
$$ H_g(\mu,\nu)= \tilde{H}_g(\mu,\nu)-\frac{1}{2}.$$
%$$ H_g(\mu,\nu)= \tilde{H}_g(\mu,\nu)-2^{B-1}.$$
If $\mu, \nu\in\{d, (\frac{d}{2},\frac{d}{2})\} $ and $d\equiv 0 \;\mod 4$,
$$H_g(\mu,\nu)= \tilde{H}_g(\mu,\nu)-2^B 4^{g-1}.$$
%$$H_g(\mu,\nu)= \tilde{H}_g(\mu,\nu)-4^{W+B-1}.$$

Otherwise, we have
$$ H_g(\mu,\nu)= \tilde{H}_g(\mu,\nu).$$

Here, $B$ is the number of partitions of $\mu,\nu$ which are of the form $ (\frac{d}{2},\frac{d}{2})$.
\end{corollary}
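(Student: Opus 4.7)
The plan is to transfer the question to the tropical side via the Correspondence Theorems~\ref{thm-corres} and~\ref{thm-corres2}, so that $\tilde{H}_g(\mu,\nu) - H_g(\mu,\nu)$ equals $\tilde{H}_g^{\trop}(\mu,\nu) - H_g^{\trop}(\mu,\nu)$, and then to read off the discrepancy directly from the defining formulas of $\tilde{m}$ and $m$. By Lemmas~\ref{lem-gammaunique} and~\ref{lem-wienerchain}, $\tilde{m}(\Gamma) = m(\Gamma)$ for every monodromy graph $\Gamma$ that is not a chain of wieners, and chains of wieners appear only when $\mu,\nu \in \{(d),(\tfrac{d}{2},\tfrac{d}{2})\}$. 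This settles the ``otherwise'' case of the corollary immediately.

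For the remaining two cases I first record the combinatorial identity $W = g$, where $W$ is the number of wieners in the unique (up to coloring) chain associated to the given $\mu,\nu$. Each wiener contributes two inner vertices and each balanced fork contributes one, hence $r = 2W + B$; combined with $r = 2g - 2 + \ell(\mu) + \ell(\nu) = 2g + B$ (using $\ell(\mu) + \ell(\nu) = 2 + B$) this gives $W = g$. It is this step which converts the $2W$ appearing in the overcount into the $2g$ of the stated formula.

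When $d \equiv 2 \;\mod 4$, the wiener weights $d/2$ are odd, and a short inspection of the admissible vertex colorings in Definition~\ref{def-realcover} shows that exactly two colorings of the chain are possible: the ``dashed'' one (weight-$d$ connectors dashed, wiener and fork edges normal) and its counterpart (weight-$d$ connectors normal, wiener and fork edges bold). By Definition~\ref{def-realtrophurwitz}(a), only the dashed coloring satisfies $m \neq \tilde{m}$, namely $m(\text{dashed}) = 0$. Substituting $E = W - 1 + B$ dashed even bounded edges and zero bold wieners into the $\tilde{m}$-formula gives
\[
  \tilde{m}(\text{dashed}) \;=\; \frac{2^{W-1+B}}{2^{W+B}} \;=\; \frac{1}{2},
\]
which is precisely the asserted discrepancy.

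When $d \equiv 0 \;\mod 4$, all edge weights are even. The vertex-color rules force each bounded weight-$d$ connector to be normal (a dashed connector cannot be cut into two even wiener edges), while each of the $W$ wieners and each of the $B$ balanced forks may independently be colored bold or normal. This produces $2^{W+B}$ admissible colorings organized into $2^{W+B-1}$ complementary pairs. By Definition~\ref{def-realtrophurwitz}(b) each such pair contributes an overcount of $2^{W-1}$ to $\tilde{H}_g^{\trop} - H_g^{\trop}$, for a total of $2^{W+B-1} \cdot 2^{W-1} = 2^{2W+B-2} = 2^B \cdot 4^{g-1}$ after substituting $W = g$. The main obstacle I foresee is the careful color bookkeeping in each parity case, namely verifying that exactly two (respectively $2^{W+B}$) admissible colorings arise; once that is pinned down, the numerical identities drop out of the multiplicity formulas by direct substitution.
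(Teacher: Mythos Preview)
Your proof is correct and follows essentially the same approach as the paper's own argument: reduce to the tropical side via the two correspondence theorems, observe that only chains of wieners can produce a discrepancy, and then compute the discrepancy directly from the multiplicity formulas in Definition~\ref{def-realtrophurwitz}. You supply more detail than the paper does in a couple of places---the explicit verification that $W=g$ via $r=2W+B=2g+B$, the computation $\tilde m(\text{dashed}) = 2^{W-1+B}/2^{W+B} = 1/2$, and the justification that exactly two (resp.\ $2^{W+B}$) colorings occur---but the logical skeleton is identical.
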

\begin{proof}
 If $\mu, \nu\in\{d, (\frac{d}{2},\frac{d}{2})\} $ and $d\equiv 2 \;\mod 4$, then the only difference between the two counts is that we count the chain of wieners with dashed even edges as $0$ when counting without real structure. It is counted with multiplicity $1/2$ when counting with real structure. 
Let $\mu, \nu\in\{d, (\frac{d}{2},\frac{d}{2})\} $ and $d\equiv 0 \;\mod 4$. Then there are $2^{W+B}$ ways to color chains of wieners, so we have $2^{W+B-1}$ pairs of chains of wieners of complementary colors. For each, we have to subtract $2^{W-1}$, so altogether we subtract $2^{W+B-1}\cdot 2^{W-1}=2^B 4^{W-1}$. Obviously, for a chain of wiener, $W=g$.
It follows from the correspondence theorems \ref{thm-corres} and \ref{thm-corres2} that in all other cases, we have $H_g(\mu,\nu)=H_g^{\trop}(\mu,\nu)=\sum_{\Gamma}m(\Gamma)=\sum_{\Gamma}\tilde{m}(\Gamma)=\tilde{H}_g^{\trop}(\mu,\nu)= \tilde{H}_g(\mu,\nu)$.
\end{proof}

\begin{remark}\label{rem-multuncolor}
 Note that we can also group real monodromy graphs that differ only in the coloring in equivalence classes and define the multiplicity of an equivalence class to be the sum of the multiplicities of the elements. In this way, we can express the real double Hurwitz numbers as sums over uncolored monodromy graphs. We denote an uncolored monodromy graph by $\Gamma_{\uc}$. We set $m_{\uc}(\Gamma_{\uc})=\sum_{\Gamma} m(\Gamma)$ where $\Gamma$ goes over all colorings of $\Gamma_{\uc}$, and analogously $\tilde{m}_{\uc}(\Gamma_{\uc})=\sum_{\Gamma} \tilde{m}(\Gamma_{\uc})$.
Then obviously we have
$$ H_g^{\trop}(\mu,\nu)= \sum_{\Gamma_{\uc}} m_{\uc}(\Gamma_{\uc}), \mbox{ and }$$
$$ \tilde{H}_g^{\trop}(\mu,\nu)= \sum_{\Gamma_{\uc}} \tilde{m}_{\uc}(\Gamma_{\uc}),$$
where the sum goes over all monodromy graphs without color.

If $\Gamma_{\uc}$ is not a chain of wieners, we have (see Lemma \ref{lem-wienerchain})
\[
  m_{\uc}(\Gamma_{\uc})
    = \tilde{m}_{\uc}(\Gamma_{\uc})
    = \begin{cases}
       2^{B'} m(\Gamma) &\text{if $\Gamma_{\uc}$ has a  coloring $\Gamma$,} \\
        0 &\text{if there exists no coloring.}
      \end{cases}
\]
Here, $B'$ denotes the number of even balanced forks. Note that if there is a coloring, we can independently choose the colorings of the even forks to be bold or normal, leading to $2^{B'}$ colorings each of the same multiplicity.

If $\Gamma_{\uc}$ is a chain of wieners and $d\equiv 2 \;\mod 4$, then we have seen in Lemma \ref{lem-oddwienerchain} that there are exactly two colorings, let us call them $\Gamma$ and $\Gamma'$, where $\Gamma$ contains normal and bold edges, and $\Gamma'$ contains dashed and normal edges. It follows from Lemma \ref{lem-oddwienerchain} that
$$m_{\uc}(\Gamma_{\uc})=m(\Gamma)+m(\Gamma')=2^{-W-1}\cdot d^W+ \frac{1}{2}\mbox{ and }$$ $$\tilde{m}_{\uc}(\Gamma_{\uc})= m(\Gamma)=2^{-W-1}\cdot d^W.$$
%$$m_{\uc}(\Gamma_{\uc})=m(\Gamma)+m(\Gamma')=2^{B-W-1}\cdot d^W+ 2^{B-1}\mbox{ and }$$ $$\tilde{m}_{\uc}(\Gamma_{\uc})= m(\Gamma)=2^{B-W-1}\cdot d^W.$$

If $\Gamma_{\uc}$ is a chain of wieners and $d\equiv 0 \;\mod 4$, then there are $N:=2^{W+B}$ colorings $\Gamma_1,\ldots,\Gamma_{N}$. Here, $W$ denotes the number of wieners and $B$ the number of balanced forks. The multiplicity of a coloring is determined by the number of bold wieners. If there are $i$ bold wieners in $\Gamma_j$, then $m(\Gamma_j)=\frac{1}{2^{W+B}}\cdot 2^{W+B-1}\cdot \big(\frac{d}{2}\big)^i\cdot 4^{W-i}$. There are $2^B \binom{W}{i}$ colorings with $i$ bold wieners.
Hence we obtain

$$m_{\uc}(\Gamma_{\uc})= 2^{B-1} \cdot \Big(\frac{d}{2}+4\Big)^W,$$
and, using Corollary \ref{cor-tildeh},
$$\tilde{m}_{\uc}(\Gamma_{\uc})= 2^{B-1}\cdot \Big(\frac{d}{2}+4\Big)^W- 2^B 4^{W-1}.$$
\end{remark}

\section{Genus zero double Hurwitz numbers}\label{sec-walls}
Inspired by the rich structure of (complex) double Hurwitz numbers \cites{gjv:ttgodhn, ssv:cbodhn, cjm:wcfdhn, Joh10}, we now study the structure of our real double Hurwitz numbers. It turns out that for real Hurwitz numbers, the structure is not as nice as in the complex world.
The methods we use here follow closely the methods developed in \cite{CJM10}, section 6.

For simplicity, we restrict our attention to genus $0$ in this section. Furthermore, we require that at least one of $\mu$ and $\nu$ has at least $3$ parts. With this restriction, we avoid the cases in which $H_g(\mu,\nu)$ and $\tilde{H}_g(\mu,\nu)$ differ by corollary \ref{cor-tildeh}.

Since the degree cancels in the Riemann-Hurwitz formula when we fix two special ramifications, it makes sense to view Hurwitz numbers as a function
\begin{equation}H_{0}:\Big\{(\mu,\nu)\in \N^{\ell(\mu)+\ell(\nu)}\;|\; \sum \mu_i=\sum \nu_i \Big\}\rightarrow \Q:(\mu,\nu)\mapsto H_{0}(\mu,\nu).\label{eq-function}\end{equation}
In fact, we could even only fix the sum of length $\ell(\mu)+\ell(\nu)=n$ and use positive and negative signs to indicate which entry belongs to which partition. This latter point of view has the advantage that we can discuss more Hurwitz numbers in a unified way. Since this aspect does not play an important role here, we choose to stick to our old notation of $\mu$ and $\nu$ and fix the lengths of both.

\begin{definition}
 We define a function $\pari:\N\rightarrow \{1,2\}$ by sending an even number to $2$ and an odd number to $1$.
\end{definition}

It follows from Lemma 6.4 of \cite{CJM10} that the weights of the edges of a monodromy graph whose weights at the ends are prescribed by $\mu$ and $\nu$ are given as signed sums of entries of $\mu$ and $\nu$:
the weight $\omega(e)$ equals $$\omega(e)=\sum_{i\in I}\mu_i - \sum_{j\in J}\nu_j$$ where $I\subset \{1,\ldots,\ell(\mu)\}$ and $J\subset \{1,\ldots,\ell(\nu)\}$ are the subsets of in- and out-ends belonging to the connected component of $\Gamma\setminus\{e\}$ from which $e$ points away.
For a fixed monodromy graph $\Gamma$, its multiplicity thus equals
$$ 2^{-B} \prod_e \pari(\omega(e))=  2^{-B}\prod_e \pari(\sum_{i\in I}\mu_i - \sum_{j\in J}\nu_j).   $$

At first glance, the map sending a tuple $(\mu,\nu)$ to the multiplicity of $\Gamma$ with the weights of the ends given by $(\mu,\nu)$ has no nice structure.
When we restrict to the set of points $(\mu,\nu)$ with all entries $\mu_i$ and $\nu_i$ even, then the weight of any interior edge is even and we can conclude $\pari(\omega(e))=2$ for all interior edges and thus the multiplicity equals $2^{\ell(\mu)+\ell(\nu)-B-3}$, since a $3$-valent graph with $ \ell(\mu)+\ell(\nu)$ ends has $\ell(\mu)+\ell(\nu)-3$ interior edges by an Euler characteristic count.

Assume that the entries of $\mu$ and $\nu$ are even and distinct, i.e.\ $|\Aut(\mu)|=|\Aut(\nu)| =1$. Consider a fixed tree $T$ with $n=\ell(\mu)+\ell(\nu)$ marked ends. We discuss the possibilities to obtain a monodromy graph from $T$ when imposing the weights $\mu$ and $\nu$ according to the labels of the ends.
First, it follows from Lemma 2.2 of \cite{BCM12} that we can find suitable edge orientations in at most one way.
We set $p(T)=1$ if there is a suitable edge orientation, and $p(T)=0$ else.
If $p(T)=1$ then we denote by $o(T)$ as in remark \ref{rem-moncount} the number of vertex orderings respecting the partial ordering imposed by the edge orientations.
Finally, as by our assumptions there are no wiener nor balanced forks in the graph, 
there is a unique admissible coloring painting all edges normal.
It follows from remark \ref{rem-moncount} (together with the correspondence theorems \ref{thm-corres} and \ref{thm-corres2} and corollary \ref{cor-tildeh}) that
\begin{equation} H_0(\mu,\nu)= 2^{n-3}\sum_{T}p(T) o(T),\label{eq-hfunction}\end{equation}
%if all entries of $\mu$ and $\nu$ are even 
where the sum goes over all trees $T$ with $n$ marked ends. 
In the general case (i.e.\ without the assumption that $\mu$ and $\nu$ are even and have no automorphisms) the expression provides an upper bound (see also Remark \ref{rem-multuncolor}).
We can conclude the following result:
\begin{theorem}\label{thm-piecewise}
Let at least one of the partitions $\mu$ and $\nu$ have at least three parts and $n=\ell(\mu)+\ell(\nu)$.

 Genus zero real double Hurwitz numbers with positive real branch points $H_0(\mu,\nu)$ considered as a function as in equation (\ref{eq-function}) are bounded by the piecewise constant function
$$F: \N^{n}\rightarrow \Q:(\mu,\nu)\mapsto  2^{n-3}\sum_{T}p(T) o(T).$$
Here, the sum goes over all trees $T$ with $n$ marked ends and $p(T)$ and $o(T)$ are as defined above.

If all entries of $\mu$ and $\nu$ are even and $|\Aut(\mu)|=|\Aut(\nu)| =1$
(i.e.\ on $(2\N)^n\setminus \{\text{diagonals}\}$),
the function $H_0$ equals the upper bound $F$, i.e.\ we have $H_0(\mu,\nu)= F(\mu,\nu)$.

Walls of the piecewise constant function $F$ are, exactly as in the case of complex double Hurwitz numbers, given by hyperplanes of the form
$$ \{\sum_{i\in I}\mu_i - \sum_{j\in J}\nu_j=0\} .$$

\end{theorem}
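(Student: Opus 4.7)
The plan is to combine the correspondence theorems with the monodromy-graph reformulation and then extract the piecewise structure from the explicit edge weights $\omega(e) = \sum_{i \in I}\mu_i - \sum_{j \in J}\nu_j$ of Lemma~6.4 of \cite{CJM10}. Since at least one of $\mu,\nu$ has length $\geq 3$, Corollary~\ref{cor-tildeh} yields $H_0(\mu,\nu) = \tilde{H}_0(\mu,\nu)$ and Theorem~\ref{thm-corres} identifies this with $\tilde{H}_0^{\trop}(\mu,\nu)$. In genus zero every monodromy graph is a trivalent tree with $n$ marked ends, hence exactly $n-3$ bounded edges, and in particular has no wieners. Via Remark~\ref{rem-multuncolor} I would rewrite the tropical number as a sum over uncolored trees $T$ of $o(T)\,m_{\uc}(T)$, noting that $m_{\uc}(T)$ vanishes unless $T$ admits an admissible orientation.

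For the upper bound, I would observe that when $T$ admits such an orientation, the coloring multiplicity $2^{-B}\prod_e \pari(\omega(e))$ recalled in the body (with a factor $2^{B'}$ absorbing the free bold/normal choice on each even balanced fork) is bounded by $2^{n-3}$, since each of the $n-3$ bounded edges contributes a parity factor in $\{1,2\}$. Summing over $T$ then yields $H_0(\mu,\nu) \leq F(\mu,\nu)$. When all entries of $\mu$ and $\nu$ are even and distinct, three simplifications occur simultaneously: (i) by Lemma~6.4 of \cite{CJM10} every bounded edge weight is a nonzero even integer, so $\pari(\omega(e)) = 2$ throughout; (ii) distinctness of entries rules out balanced forks, so $B = B' = 0$; and (iii) by Lemma~2.2 of \cite{BCM12} each tree admits at most one admissible orientation, so $p(T) \in \{0,1\}$ is exactly the indicator of admissibility. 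These three facts collapse the general weighted count to equation (\ref{eq-hfunction}), yielding $H_0 = F$ on $(2\N)^n$ off the diagonals.

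For the wall structure, I would note that the only $(\mu,\nu)$-dependence of $F$ enters through the parities of the signed sums $\omega(e)$ (constant on each parity class of the entries) and their signs (which determine the edge orientation, hence both $p(T)$ and the number $o(T)$ of linear extensions of the induced vertex order). Within any fixed parity class the parities are constant, while the signs are constant precisely on the chambers of the hyperplane arrangement $\{\sum_{i\in I}\mu_i = \sum_{j\in J}\nu_j\}$; hence all walls of $F$ lie in this arrangement, matching the corresponding chamber structure known for complex double Hurwitz numbers. The main obstacle I anticipate is the careful bookkeeping in the bound: matching the body-text multiplicity $2^{-B}\prod_e \pari(\omega(e))$ against $\tilde{m}(\pi)$ from Definition~\ref{def-realtrophurwitz} while properly accounting for the $2^{B'}$ independent colorings of even balanced forks and for the ends, whose parity factors should not be double-counted against those of the bounded edges.
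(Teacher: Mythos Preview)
Your approach is essentially the same as the paper's: reduce to the tropical count via the correspondence theorem, express it as a sum over (uncolored) trees with $n$ marked ends, bound the per-tree multiplicity by $2^{n-3}$ using $E \leq n-3$ and $2^{B'-B}\leq 1$, and read off the wall structure from the sign changes of the edge weights $\omega(e)$.

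Two small remarks. First, in your wall discussion you say the $(\mu,\nu)$-dependence of $F$ enters through the \emph{parities} and the signs of the $\omega(e)$; in fact $F(\mu,\nu)=2^{n-3}\sum_T p(T)o(T)$ depends only on the signs (parity plays no role in $p$ or $o$), so the chamber structure really is just that of the hyperplane arrangement, as you conclude anyway. Second, there is one bookkeeping point the paper makes explicit that you only hint at: when $|\Aut(\mu)|\cdot|\Aut(\nu)|>1$, the trees in $F$ have \emph{labelled} ends while the Hurwitz count is over unlabelled monodromy graphs, so several labelled trees correspond to one monodromy graph; this is another source of overcounting in $F$ (not undercounting), and should be mentioned to close the inequality $H_0 \leq F$ in the non-distinct case.
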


\begin{proof}
 The statement about equality on $(2\N)^n\setminus \{\text{diagonals}\}$ follows from equation (\ref{eq-hfunction}) and the discussion above. When we drop the assumption that all entries are even,
$2^{n-3}$ is still an upper bound for the multiplicity of a monodromy graph (as $E \leq n-3$ and
no wiener contribute). When $|\Aut(\mu)|\cdot|\Aut(\nu)| >1$, we count in fact labelled monodromy graphs, as our trees are labelled. Again, this leads only to an overcount in $F$. The only source for possible undercounting is the existence of several colorings for the same (uncolored) monodromy graph. However, this can only occur if the graph contains balanced even forks. More precisely, the
number of possible colorings is either zero or given by $2^{B'}$, where $B'$ denotes the number
of balanced even forks. However, this is compensated by the factor $2^{-B}$ in the multiplicity
formula. So $F$ is indeed an upper bound.

Walls of the piecewise constant function appear if there is a tree for which $p(T)$ changes from $1$ to $0$, i.e.\ if there is an edge that cannot be oriented the way it used to be for the new values of $\mu$ and $\nu$. This happens exactly when there is an edge of weight $0$, i.e.\ for $(\mu,\nu)$ satisfying $ \sum_{i\in I}\mu_i - \sum_{j\in J}\nu_j=0$.
 %The statement of the upper bound and the equality on the lattice with all entries even follows from equation (\ref{eq-hfunction}) and the discussion above. Walls of the piecewise constant function appear if there is a tree for which $p(T)$ changes from $1$ to $0$, i.e.\ if there is an edge that cannot be oriented the way it used to be for the new values of $\mu$ and $\nu$. This happens exactly when there is an edge of weight $0$, i.e.\ for $(\mu,\nu)$ satisfying $ \sum_{i\in I}\mu_i - \sum_{j\in J}\nu_j=0$.
\end{proof}

Recall that for complex double Hurwitz numbers, we obtain a piecewise polynomial function for which we can express wall-crossings in terms of ``smaller'' Hurwitz numbers \cite{ssv:cbodhn}.
We should thus also try to consider wall-crossings for the upper bound piecewise constant function $F$.
It turns out however that we do not get a nice recursive structure, since the edge weight does not depend on the entries $\mu_i,\nu_i$ but equals two. This fact is responsible for a sign that prevents us from describing the vertex orderings on $\Gamma$ in terms of the vertex orderings on the components of $\Gamma\setminus e$ (where $e$ is the edge of weight $0$).

\begin{definition}
 Let $W= \{\sum_{i\in I}\mu_i - \sum_{j\in J}\nu_j=0\}$ be a wall of the piecewise constant function $F$ of theorem \ref{thm-piecewise}. Let $(\mu,\nu)$ be a point on the right of $W$ and $(\tilde{\mu},\tilde{\nu})$ a point on the left. By a wall-crossing $F_W$, we denote the value of the difference $F_W=F(\mu,\nu)-F(\tilde{\mu},\tilde{\nu})$. A wall-crossing is only defined up to sign.
\end{definition}

Consider the wall $W= \{\sum_{i\in I}\mu_i - \sum_{j\in J}\nu_j=0\}$ and let $T$ be a tree that contains an edge $e$ of weight $\sum_{i\in I}\mu_i - \sum_{j\in J}\nu_j$. We denote by $T_1$ the orientation of $T$ that appears on the right of the wall and by $T_2$ the orientation that appears on the left of the wall. These two orientations only differ in the direction of the edge $e$. We denote by $p_1(T)$ the value of $p(T)$ on the right of the wall.
\begin{theorem}
 Wall-crossing formulas for the piecewise constant function $F$ from theorem \ref{thm-piecewise} are given by
$F_W=2^{n-3}\sum_{T} p_1(T)\cdot (o(T_1)-o(T_2))$, where the sum goes over all trees $T$ with an edge of weight $\sum_{i\in I}\mu_i - \sum_{j\in J}\nu_j$.
\end{theorem}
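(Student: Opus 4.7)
The plan is to work directly from the formula $F(\mu,\nu) = 2^{n-3}\sum_T p(T)o(T)$ established in Theorem \ref{thm-piecewise}, and compare its values on the two sides of the wall $W = \{\sum_{i\in I}\mu_i - \sum_{j\in J}\nu_j = 0\}$. First I would recall that each edge $e$ of a tree $T$ with $n$ marked ends determines a unique bipartition of these ends given by the two components of $T\setminus e$, and the weight of $e$ is $\omega(e) = \sum_{i\in I_e}\mu_i - \sum_{j\in J_e}\nu_j$ for that bipartition. Consequently, $\omega(e)$ changes sign across $W$ if and only if $(I_e,J_e) = (I,J)$ (up to swapping the two sides of the bipartition). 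Since distinct edges of a tree determine distinct bipartitions, each tree has at most one edge that contributes a change of sign across $W$.

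Next I would split the sum $F(\mu,\nu) - F(\tilde\mu,\tilde\nu)$ into two parts. For trees $T$ that do not contain any edge matching the bipartition defining $W$, every edge weight keeps its sign across $W$; by Lemma~2.2 of \cite{BCM12} the admissible orientation is uniquely determined by these signs, so $p(T)$ and $o(T)$ take the same values on both sides and these trees contribute zero to the difference. This leaves only the trees $T$ with one distinguished edge $e$ of weight $\pm w$, where $w = \sum_{i\in I}\mu_i - \sum_{j\in J}\nu_j$.

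For such a tree, the orientation $T_1$ on the right and $T_2$ on the left differ precisely in the direction of $e$, the other edges being unaffected. Since admissibility simply means that the (unique) orientation making all weights positive is consistent, and reversing $e$ turns $T_1$ into exactly such an orientation on the left, one sees that $p(T)$ evaluated on the right and on the left agree; I would denote this common value $p_1(T)$. The vertex-ordering count is by definition $o(T_1)$ on the right of $W$ and $o(T_2)$ on the left. Putting everything together yields
\[
F_W = F(\mu,\nu) - F(\tilde\mu,\tilde\nu)
= 2^{n-3}\sum_T p_1(T)\bigl(o(T_1) - o(T_2)\bigr),
\]
where the sum is over all trees $T$ having an edge whose bipartition is $(I,J)$, which is exactly the formula claimed.

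The only point that requires some care is the bookkeeping for $o(T_1) - o(T_2)$: one must verify that flipping the single edge $e$ changes the partial order only in the relative order of the two inner vertices incident to $e$ (and its transitive consequences), so that $o(T_1)$ and $o(T_2)$ are really the correct counts of linear extensions on the two sides. This is immediate from the fact that the partial order is generated by the directed edges, but it is the one step that is not entirely formal. The disappearance of a clean recursive wall-crossing formula (as is available in the complex case) is, as the authors already indicate, due to the constant factor $2^{n-3}$ coming from the parity weighting, which does not split multiplicatively when cutting along $e$.
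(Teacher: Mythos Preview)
Your argument is correct and follows the same approach as the paper's (very terse) proof: split the sum defining $F$ into trees with and without an edge realizing the bipartition $(I,J)$, observe that the latter contribute identically on both sides of $W$, and for the former take the difference of the linear-extension counts $o(T_1)-o(T_2)$. Your additional remarks—uniqueness of the distinguished edge, the equality of $p(T)$ on both sides, and the commentary in the final paragraph—are all fine but go beyond what the paper writes out; the verification you flag as ``the only point that requires some care'' is in fact immediate from the definitions of $T_1$, $T_2$, and $o(\,\cdot\,)$.
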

\begin{proof}
 A tree which does not have an edge of weight $\sum_{i\in I}\mu_i - \sum_{j\in J}\nu_j$ contributes equally to both sides and thus cancels in the wall-crossing. The formula for $F_W$ is the difference of the two functions evaluated at the trees which have an edge of this weight.
\end{proof}
\begin{example}
 Let $n=5$ and consider the graph $T$ contributing to the wall-crossing $\mu_2=\nu_3$ depicted below:
\begin{center}
 \input{./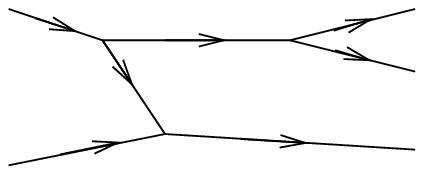tex_t} \hspace{13ex}
 \input{./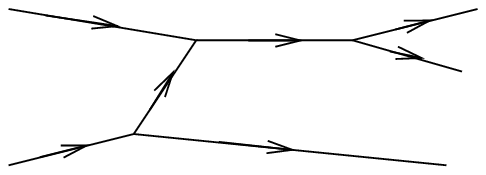tex_t}
\end{center}
The two orientations of the edge with weight $\mu_2-\nu_3$ are shown on the left and right.
We have $o(T_1)=2$ since the two right vertices can appear in any order, and $o(T_2)=1$.
That is, $T$ contributes to the wall-crossing $F_W$ with $2^{2}\cdot (2-1)$.

Note that in the complex world, since the edge $e$ contributes with a weight that changes sign also when crossing the wall, we would get a contribution of $o(T_1)+o(T_2)=3$ times the product of edge weights. The sum of vertex orderings equals $\binom{3}{1}\cdot o(T\setminus e)$ --- here, $3$ is the total number of vertices and $1$ is the number of vertices appearing in the lower component of $T\setminus e$. In this way, we can break down the contribution to factors belonging to the two components of $T\setminus e$, and finally obtain an expression of the wall-crossing in terms of smaller Hurwitz numbers. The fact that in the real world, the sign of the difference does not cancel with the sign of the weight of the edge $e$ prevents us from obtaining such an expression here.
\end{example}

\section{The Cayley graph}\label{sec-cayley}

Now we will consider the tuples of lemma \ref{lem-tuples} as walks in the Cayley graph of the symmetric group generated by all transpositions, and see that the induced subgraph whose vertices are the involutions plays an important role.

\begin{definition}[Walk in the Cayley graph]\label{def-cayley-walk}
Given a tuple $(\gamma, \sigma, \tau_1, \ldots, \tau_r)$ of elements of the symmetric group $\mathbb{S}_d$ as in lemma \ref{lem-tuples}, the corresponding \emph{walk} in the (left) Cayley graph of $\mathbb{S}_d$ generated by all transpositions is
\[
  \sigma_0
  \xrightarrow{\tau_1} \sigma_1
  \xrightarrow{\tau_2} \sigma_2
  \cdots
  \xrightarrow{\tau_r} \sigma_r,
\]
where $\sigma_0 = \sigma$ and $\sigma_i = \tau_i \circ \cdots \circ \tau_1 \circ \sigma$ are the vertices of the walk. They are connected by edges labeled $\tau_1, \tau_2, \ldots, \tau_r$, since $\sigma_{i+1} = \tau_{i+1} \circ \sigma_i$ for $i = 0, \ldots, r-1$.
\end{definition}

Given a tuple $(\gamma, \sigma, \tau_1, \ldots, \tau_r)$ which satisfies the conditions (a--e) of lemma \ref{lem-tuples}, definition \ref{def-cayley-walk} lets us obtain a walk in the Cayley graph, but not all walks in the Cayley graph come from such tuples, so there is the question of how to go back. In particular, the involution $\gamma$ doesn't appear in definition \ref{def-cayley-walk}. Thankfully, most of the conditions of lemma \ref{lem-tuples} have a straightforward interpretation in terms of walks in the Cayley graph:
\begin{enumerate}
\item \textit{$\sigma$ has cycle type $\mu$;}

This holds exactly when the initial vertex $\sigma_0 = \sigma$ has cycle type $\mu$.

\item \textit{the $\tau_i$ are transpositions;}

This is built into the definition of the Cayley graph.

\item \textit{$\tau_r\circ\cdots\circ\tau_1\circ\sigma$ has cycle type $\nu$;}

This holds exactly when the final vertex $\sigma_r = \tau_r\circ\cdots\circ\tau_1\circ\sigma$ has cycle type $\nu$.

\item \textit{the subgroup generated by $\sigma,\tau_1,\ldots,\tau_r$ acts transitively on the set $\{1,\ldots,d\}$;}

This condition is the exception in having no straightforward interpretation in the Cayley graph. However, since this is essentially a condition about connected components, it is possible to ignore it at first, and then to recover it by taking the logarithm of a suitable generating function. Note also that when $\mu, \nu$ are not on a wall of the wall-crossing arrangement (that is, in the generic case), this condition is guaranteed to hold.

\item \textit{$\gamma$ is an involution (i.e.\ $\gamma^2 = \id$) satisfying $\gamma\circ\sigma\circ\gamma=\sigma^{-1}$ and $\gamma\circ (\tau_i\circ\cdots\circ\tau_1\circ\sigma)\circ\gamma = (\tau_i\circ\cdots\circ\tau_1\circ\sigma)^{-1}$ for all $i=1,\ldots,r$.}

This is the only condition which depends on $\gamma$. It can be rephrased as follows: for the `translated' walk
\[
  (\sigma_0 \circ \gamma)
  \xrightarrow{\tau_1} (\sigma_1 \circ \gamma)
  \xrightarrow{\tau_2} (\sigma_2 \circ \gamma)
  \cdots
  \xrightarrow{\tau_r} (\sigma_r \circ \gamma),
\]
obtained by precomposing each vertex of the original walk with the involution $\gamma$, all the vertices are also involutions.
This is because we have $\gamma \circ \sigma_i \circ \gamma = \sigma_i^{-1}$ if and only if $(\sigma_i \circ \gamma)^2 = \id$, that is, an involution.
Given this condition, it makes sense to consider the part of the Cayley graph whose vertices are the involutions in $\mathbb{S}_d$.
\end{enumerate}

\begin{definition}[Restricted Cayley graph]
  The \emph{restricted Cayley graph} is the induced subgraph of the Cayley graph for $\mathbb{S}_d$ (generated by all transpositions) whose vertices are exactly the involutions in $\mathbb{S}_d$.
\end{definition}

\begin{example}
The figure below illustrates the restricted Cayley graphs for $d=3$ and $d=4$. On the left, the whole Cayley graph for $d=3$ is shown, with the restricted Cayley graph highlighted. On the right, only part of the whole Cayley graph for $d=4$ is shown, but all of the restricted Cayley graph appears, again highlighted.
\[
  \includegraphics[scale=.5]{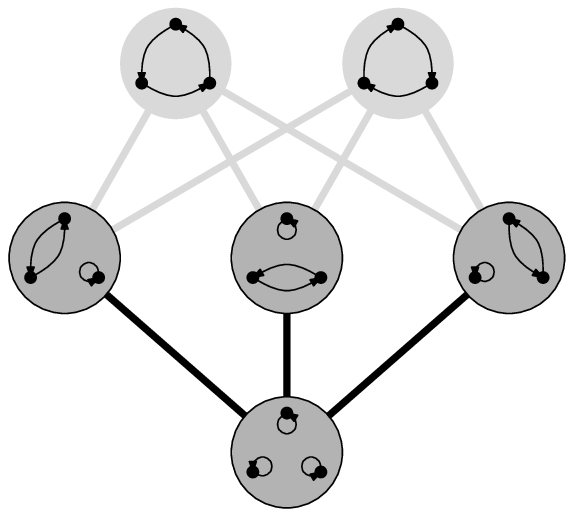}
  \qquad
  \includegraphics[scale=.5]{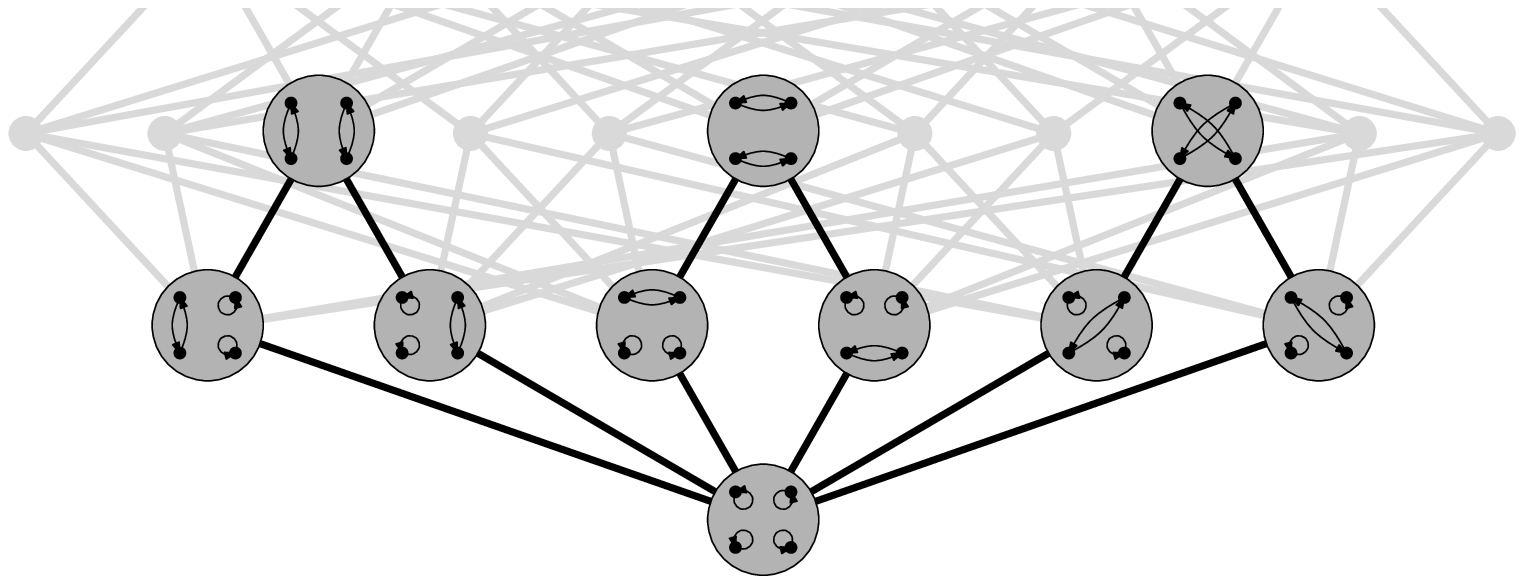}
\]
\end{example}

The relevance of this to the problem of counting the tuples of lemma \ref{lem-tuples} is summarized in the following theorem.

\begin{theorem}\label{thm:restricted-path}
  For a fixed involution $\gamma$, fixed number of transposition $r$, and fixed starting and ending permutations $\sigma_0 = \sigma$ and $\sigma_r = \sigma \circ \tau_1 \circ \cdots \circ \tau_r$, the number of tuples which satisfy the conditions of lemma \ref{lem-tuples} is equal to the number of walks
  \[
    \sigma'_0
    \xrightarrow{\tau_1} \sigma'_1
    \xrightarrow{\tau_2} \sigma'_2
    \cdots
    \xrightarrow{\tau_r} \sigma'_r
  \]
  in the restricted Cayley graph from $\sigma'_0 = \sigma_0 \circ \gamma$ to $\sigma'_r = \sigma_r \circ \gamma$ such that the subgroup generated by $\sigma, \tau_1, \ldots, \tau_r$ acts transitively on $\{1,\ldots,d\}$.
\end{theorem}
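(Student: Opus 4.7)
The plan is to establish a direct bijection by \emph{right-translating} the walk by $\gamma$. Given a tuple $(\sigma, \tau_1, \ldots, \tau_r)$ with $\sigma_i := \tau_i \circ \cdots \circ \tau_1 \circ \sigma$, set $\sigma'_i := \sigma_i \circ \gamma$. Since $\sigma'_i = \tau_i \circ \sigma'_{i-1}$, this produces a walk
\[
  \sigma'_0 \xrightarrow{\tau_1} \sigma'_1 \xrightarrow{\tau_2} \cdots \xrightarrow{\tau_r} \sigma'_r
\]
in the (unrestricted) left Cayley graph of $\mathbb{S}_d$ with the \emph{same} edge labels, and with endpoints $\sigma_0 \circ \gamma$ and $\sigma_r \circ \gamma$. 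Conversely, any walk with these endpoints recovers the tuple via $\tau_i = \sigma'_i \circ (\sigma'_{i-1})^{-1}$, so this translation is a bijection between tuples $(\tau_1, \ldots, \tau_r)$ (with $\sigma, \sigma_r$ fixed) and walks of length $r$ from $\sigma_0 \circ \gamma$ to $\sigma_r \circ \gamma$ in the Cayley graph.

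The central step is to show that under this bijection, condition \ref{gammacond} of lemma \ref{lem-tuples} corresponds exactly to the walk lying in the \emph{restricted} Cayley graph. Using $\gamma^2 = \id$, observe the algebraic identity
\[
  (\sigma_i \circ \gamma)^2 = \sigma_i \circ \gamma \circ \sigma_i \circ \gamma = \id
  \quad \Longleftrightarrow \quad
  \gamma \circ \sigma_i \circ \gamma = \sigma_i^{-1}.
\]
Hence $\sigma'_i$ is an involution if and only if $\gamma$ conjugates $\sigma_i$ to $\sigma_i^{-1}$. Applied to every $i$, this converts condition \ref{gammacond} (the $\gamma$-compatibility of each $\sigma_i$) into the condition that every vertex $\sigma'_i$ of the translated walk is an involution, i.e.\ that the walk stays in the restricted Cayley graph.

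The remaining conditions of lemma \ref{lem-tuples} are handled directly: conditions (a) and (c) fix the cycle types of $\sigma_0$ and $\sigma_r$, and our assumption fixes these permutations themselves; condition (b) is built into the choice of transpositions as Cayley generators; and the transitivity condition (d) is stated in terms of the same generators $\sigma, \tau_1, \ldots, \tau_r$ on both sides of the bijection, so it carries over verbatim. Combining these observations yields the claimed equality of counts.

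I do not expect any genuine obstacle here: the proof is essentially a rewriting, with the only subtle point being the equivalence of $(\sigma \gamma)^2 = \id$ and $\gamma \sigma \gamma = \sigma^{-1}$. Care must only be taken with the convention that the Cayley graph is the \emph{left} one (so $\sigma'_i = \tau_i \sigma'_{i-1}$ and the edge label is indeed $\tau_i$), and with noting that $\gamma$ is not translated --- it is fixed throughout, determining which vertex of the Cayley graph serves as the starting point of the translated walk.
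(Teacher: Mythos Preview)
Your proof is correct and follows essentially the same approach as the paper: both arguments right-translate the Cayley walk by $\gamma$, use the identity $(\sigma_i\gamma)^2=\id \iff \gamma\sigma_i\gamma=\sigma_i^{-1}$ to convert condition \ref{gammacond} into the requirement that the translated walk stays among involutions, and note that the remaining conditions are either built into the setup or carried over verbatim. Your write-up is in fact slightly more explicit than the paper's (which relies on the discussion preceding the theorem), but there is no substantive difference.
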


\begin{proof}
  As discussed above, the tuples which satisfy the conditions of lemma \ref{lem-tuples} can each be interpreted as a distinct walk in the (usual) Cayley graph of $\mathbb{S}_d$ of the form
  \[
    \sigma_0
    \xrightarrow{\tau_1} \sigma_1
    \xrightarrow{\tau_2} \sigma_2
    \cdots
    \xrightarrow{\tau_r} \sigma_r,
  \]
  which can be `translated' by the involution $\gamma$ to a walk
  \[
    (\sigma_0 \circ \gamma)
    \xrightarrow{\tau_1} (\sigma_1 \circ \gamma)
    \xrightarrow{\tau_2} (\sigma_2 \circ \gamma)
    \cdots
    \xrightarrow{\tau_r} (\sigma_r \circ \gamma)
  \]
  in the restricted Cayley graph. This translation by $\gamma$ can be inverted by translating by $\gamma^{-1}$, so the count is preserved. The conditions (a--c) of lemma \ref{lem-tuples} are automatically satisfied by the choice of $\sigma_0$ and $\sigma_r$; condition (d) is equivalent to the statement that the `translated' walk stays within the restricted Cayley graph; and condition (e) is repeated in the statement of theorem \ref{thm:restricted-path}.
\end{proof}

Note that the restricted Cayley graph is actually a well-known graph; if we identify each involution in $\mathbb{S}_d$ with a matching in the complete graph $K_d$ in the natural way, and consider the partial order relation given by the subgraph relation, then we are dealing with the Hasse diagram of this poset. This is because any step in the Cayley graph restricted to these vertices can only be a join of two fixed points, or a cut of a 2-cycle.

Because of this, it becomes easy to count the number paths from a vertex $\sigma'$ to a vertex $\rho'$ in the restricted Cayley graph in certain cases. For example, we deal with the case of minimum length paths (which correspond to genus 0 covers, if the connectivity requirement is satisfied) in theorem \ref{thm:walks-count-all} below.

\begin{definition}[Notation for involutions and matchings]
Let us write $I_d$ for the set of involutions in the symmetric group $\mathbb{S}_d$, $M_d$ for the set of matchings in the complete graph $K_d$, and $m(\sigma')$ for the matching which corresponds to an involution $\sigma' \in I_d$.
\end{definition}

Note that, given involutions $\sigma', \rho' \in I_d$, the union of their matchings $m(\sigma')$ and $m(\rho')$ has a special structure: its connected components consist of alternating paths and alternating cycles, that is, paths and cycles where every edge from $m(\sigma')$ is followed by an edge from $m(\rho')$ and vice versa. This is not quite rigorous in the case of a component
which consists of a single edge contained in both matchings. By slight abuse of terminology, 
we will consider such a component as an alternating cycle of length 2.

\begin{lemma}\label{lem:walks-count-connected}
  Let $\sigma', \rho' \in I_d$ be involutions, and suppose that $m(\sigma') \cup m(\rho')$ consists of a single connected component. Let $p_0$ be the number of minimum length paths from $\sigma'$ to $\rho'$ in the restricted Cayley graph.

  If $m(\sigma') \cup m(\rho')$ is an alternating path on $d$ vertices, then $p_0$ is the coefficient of $x^{d-1} / (d-1)!$ in the generating function
  \[
    \sec(x) + \tan(x)
      = 1 + x + \frac{x^2}{2} + 2\frac{x^3}{6} + 5\frac{x^4}{24} + 16\frac{x^5}{120} + 61\frac{x^6}{720} + \cdots
  \]

  If $m(\sigma') \cup m(\rho')$ is an alternating cycle on $d$ vertices, then $p_0$ is the coefficient of $x^d / d!$ in the generating function
  \[
    \frac{x \tan(x)}{2} = \frac{x^2}{2} + 4\frac{x^4}{24} + 48\frac{x^6}{720} + 1088\frac{x^8}{40320} + 39680\frac{x^{10}}{3628800} + \cdots
  \]
\end{lemma}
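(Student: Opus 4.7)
The plan is to first determine the minimum walk length, then encode each minimum walk as a timing sequence on the edges of $m(\sigma') \cup m(\rho')$, and finally translate the resulting constraints into alternating permutations (linear in the path case, cyclic in the cycle case). I would begin by arguing that the minimum walk length from $\sigma'$ to $\rho'$ in the restricted Cayley graph equals $|m(\sigma') \triangle m(\rho')|$, which is $d-1$ in the path case and $d$ in the cycle case (the degenerate $d=2$ convention of a single shared edge simply giving length $0$). The lower bound is immediate since each step toggles exactly one edge, so $|M \triangle m(\rho')|$ decreases by at most one per step; for the upper bound, performing all cuts in $m(\sigma') \setminus m(\rho')$ first and then all joins in $m(\rho') \setminus m(\sigma')$ is always valid. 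Consequently, each minimum walk toggles every edge of $m(\sigma') \triangle m(\rho')$ exactly once and touches no other edges.

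Next, I would parameterize minimum walks by a bijection $t$ from the toggled edges $\{e_1, e_2, \ldots\}$ (labeled consecutively along the path or cycle) to $\{1, \ldots, N\}$, where $t(e)$ records the step at which $e$ is cut or joined. Since consecutive edges $e_i, e_{i+1}$ share a vertex and, by the alternation of the component, lie in opposite matchings, validity of every intermediate matching is equivalent to requiring that the cut operation precedes the join operation for each such pair. Taking without loss of generality $e_1 \in m(\sigma')$ and writing $t_i := t(e_i)$, this yields the pattern $t_1 < t_2 > t_3 < t_4 > \cdots$. In the path case, this precisely says $(t_1, \ldots, t_{d-1})$ is an up-down alternating permutation of $\{1, \ldots, d-1\}$. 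In the cycle case ($d = 2n$), the additional pair $(e_d, e_1)$ yields $t_1 < t_d$, making $(t_1, \ldots, t_d)$ a cyclic alternating permutation with odd positions as local minima and even positions as local maxima.

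Finally, I would apply classical enumerations. Up-down alternating permutations of $\{1, \ldots, n\}$ are counted by the tangent-secant numbers with EGF $\sec x + \tan x$, yielding $p_0$ in the path case. For the cycle case, I would count cyclic alternating permutations of $\{1, \ldots, 2n\}$ by conditioning on the position of the maximum element: it must occupy one of the $n$ even (local maximum) positions, and fixing it at position $d$ reduces the problem to a linear up-down alternating permutation of $\{1, \ldots, 2n-1\}$, of which there are $T_{2n-1}$. Cyclic symmetry (rotation by two positions preserves the alternating structure) then gives a total of $n \cdot T_{2n-1}$, which is precisely the coefficient of $x^{2n}/(2n)!$ in $x \tan(x)/2$, as read off from $\tan x = \sum_{n \ge 1} T_{2n-1} x^{2n-1}/(2n-1)!$. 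The main obstacle is this cycle case: justifying the $n \cdot T_{2n-1}$ count cleanly via cyclic rotation, together with a brief verification of the degenerate $d=2$ cycle, where the timing parameterization is vacuous and the formula must still return the empty-walk count of $1$.
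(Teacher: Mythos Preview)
Your proposal is correct and follows essentially the same approach as the paper: both reduce minimum-length walks to orderings of the edges in $m(\sigma')\triangle m(\rho')$ subject to the local cut-before-join constraints, recognize the resulting $t_1<t_2>t_3<\cdots$ pattern as (cyclic) alternating permutations, and invoke $\sec x+\tan x$ for the path case while handling the cycle case by conditioning on the position of the last-created edge (equivalently, the maximum of $t$) to pick up the factor $d/2$ in front of $T_{d-1}$. The only differences are cosmetic: you spell out the lower/upper bound for the minimum length and the $d=2$ degenerate check more explicitly than the paper does.
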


\begin{proof}
  As noted above, a path from $\sigma'$ to $\rho'$ in the restricted Cayley graph corresponds to a sequence a of matchings in $M_d$, starting from $m(\sigma')$ and ending with $m(\rho')$, where at each step we obtain the next matching by either cutting an edge from the current matching or joining two currently unmatched vertices.

  In particular, such a path must cut every edge which is in $m(\sigma') \setminus m(\rho')$, and create every edge which is in $m(\rho') \setminus m(\sigma')$, so a path will have minimum length if it does exactly this and nothing more. So, to count minimum length paths, it suffices to count the valid ways of putting these cuts and joins in sequence.
  The only constraint on the ordering of cuts and joins is that before joining two vertices, these vertices must be unmatched, so any edges involving them must be cut.

  Now, suppose $m(\sigma') \cup m(\rho')$ is an alternating path on $d$ vertices.
  For $d=1$, that is, a path with one vertex and no edges, there is trivially exactly one minimum length path. For $d \geq 2$, by exchanging the roles of $\sigma'$ and $\rho'$ if needed, we may assume that the alternating path starts with an edge of $m(\sigma')$, then (possibly) an edge of $m(\rho')$, and so on. Let us number these edges $1, 2, \ldots, d-1$, and write $t_1, t_2, \ldots, t_{d-1}$ for time at which each edge is either cut or created. Then, the ordering constraint is
  \[
    t_1 < t_2 > t_3 < t_4 > \cdots,
  \]
  that is, the sequence $t_1, t_2, \ldots, t_{d-1}$ must be an alternating permutation. The exponential generating function for these is (see, e.g., \cite{gj:ce}*{Section 3.2.22})
  \[
    \sec(x) + \tan(x).
  \]

  If $m(\sigma') \cup m(\rho')$ is an alternating cycle, then it must have even length, and there are exactly $d/2$ choices for for which edge is created last. If we number the edges $1, 2, \ldots, d$ around the cycle so that this last created edge is $d$, then we again have the ordering constraint
  \[
    t_1 < t_2 > t_3 < \cdots > t_{d-1}
  \]
  for the other $d-1$ edges. The exponential generating function for alternating permutations of odd length is simply
  \[
    \tan(x).
  \]
  To account for the shift from $d-1$ to $d$ and the extra factor of $d/2$, it suffices to multiply this generating function by $x/2$, yielding the generating function
  \[
    \frac{x \tan(x)}{2}
  \]
  from the claim.
\end{proof}

\begin{theorem}\label{thm:walks-count-all}
  Let $\sigma', \rho' \in I_d$ be involutions, and suppose that $m(\sigma') \cup m(\rho')$ consists of a alternating paths with $a_1, a_2, \ldots, a_m$ vertices respectively and alternating cycles with $b_1, b_2, \ldots, b_n$ vertices respectively. Let
  \[
    P(x) = \sec(x) + \tan(x), \qquad
    C(x) = \frac{x \tan(x)}{2}
  \]
  be the generating functions from lemma \ref{lem:walks-count-connected}.
  Then, the number of minimum length paths from $\sigma'$ to $\rho'$ in the restricted Cayley graph is the coefficient of
  \[
    x_1^{a_1-1} x_2^{a_2-1} \cdots x_m^{a_m-1} y_1^{b_1} y_2^{b_2} \cdots y_n^{b_n} / (d-m)!
  \]
  in the generating function
  \[
    P(x_1) P(x_2) \cdots P(x_m) C(y_1) C(y_2) \cdots C(y_n).
  \]
\end{theorem}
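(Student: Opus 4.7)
The plan is to combine the single-component counts from Lemma \ref{lem:walks-count-connected} via the standard shuffle product of exponential generating functions. I would begin by reinterpreting a minimum length path from $\sigma'$ to $\rho'$ in the restricted Cayley graph as a total ordering of a specific set of moves: as in the proof of Lemma \ref{lem:walks-count-connected}, such a path cuts each edge of $m(\sigma')\setminus m(\rho')$ exactly once, creates each edge of $m(\rho')\setminus m(\sigma')$ exactly once, and does nothing else, and is uniquely determined by the order in which these moves occur. An alternating path component on $a_i$ vertices contributes $a_i - 1$ such moves, and an alternating cycle component on $b_j$ vertices contributes $b_j$ such moves, so the total number of moves is $N = \sum_{i}(a_i - 1) + \sum_j b_j = d - m$.

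Second, I would argue that the legality constraints on the total ordering split independently across the components of $m(\sigma') \cup m(\rho')$. Since distinct components share no vertices, whether a cut or join is legal at a given moment depends only on the relative order of earlier moves within its own component: joining $\{u,v\}$ requires $u$ and $v$ to be currently unmatched, and the matched-ness of $u$ is controlled entirely by previous moves on edges touching $u$, all of which live in the component containing $u$. Therefore, the legal total orderings of all $N$ moves are exactly the shuffles of legal per-component orderings.

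Third, I would assemble the generating function. Lemma \ref{lem:walks-count-connected} supplies the per-component counts: $(a_i - 1)! \cdot [x^{a_i-1}]P(x)$ for the $i$-th path component and $b_j! \cdot [y^{b_j}]C(y)$ for the $j$-th cycle component. Multiplying these by the multinomial coefficient $\binom{d-m}{a_1-1,\ldots,a_m-1,b_1,\ldots,b_n}$ that counts the shuffles, the per-component factorials cancel against those in the multinomial, and what remains is
$$
(d-m)! \cdot \prod_{i=1}^{m} [x_i^{a_i-1}] P(x_i) \cdot \prod_{j=1}^{n} [y_j^{b_j}] C(y_j),
$$
which is precisely the coefficient of $x_1^{a_1-1}\cdots x_m^{a_m-1}y_1^{b_1}\cdots y_n^{b_n}/(d-m)!$ in $\prod_i P(x_i) \prod_j C(y_j)$.

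The main step to justify carefully is the decoupling of constraints in the second step; it relies on the explicit observation that each cut/join operation from Lemma \ref{lem:walks-count-connected} acts locally on a single component. A minor technicality is the handling of degenerate components --- isolated common fixed points ($a_i = 1$, contributing no moves and the trivial factor $P(0) = 1$) and length-$2$ alternating cycles from single shared edges --- where one should verify that the corresponding generating function coefficients still match the intended direct count.
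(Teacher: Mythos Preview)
Your proposal is correct and follows essentially the same argument as the paper: reduce a minimum-length path to an ordering of the $d-m$ cut/join moves, observe that the constraints decouple across connected components so that interleavings are counted by a multinomial coefficient, and then invoke Lemma~\ref{lem:walks-count-connected} for the per-component counts. You supply more justification for the decoupling step and flag the degenerate cases, but the strategy and the key computation are the same.
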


\begin{proof}
  As in the proof of lemma \ref{lem:walks-count-connected}, what matters is the number of ways of ordering the steps in which the edges of $m(\sigma') \setminus m(\rho')$ are cut and the edges of $m(\rho') \setminus m(\sigma')$ are created. In total, there are $d-m$ steps to be taken, of which $a_1-1, a_2-1, \ldots, a_m-1$ respectively are for the alternating paths, and $b_1, b_2, \ldots, b_n$ respectively are for the alternating cycles. By lemma \ref{lem:walks-count-connected}, the coefficients of the generating functions $P(x_i)$ and $C(y_j)$ count the ways of ordering the steps within each connected component. Since there are no constraints at all for ordering the steps between connected components, the number of ways of interleaving a given list of orderings for all the connected components is simply the multinomial coefficient
  \[
  \frac{(d-m)!}{(a_1-1)! (a_2-1)! \cdots (a_m-1)! b_1! b_2! \cdots b_n!},
  \]
  as needed.
\end{proof}

We are also interested in grouping together tuples which correspond to the same monodromy graph, as constructed in lemma \ref{sec:mon-graph}, so let us now discuss what happens on that front when we deal with translated walks.

\begin{theorem}
Given a (non-translated) Cayley walk
\[
  \sigma_0
  \xrightarrow{\tau_1} \sigma_1
  \xrightarrow{\tau_2} \sigma_2
  \cdots
  \xrightarrow{\tau_r} \sigma_r
\]
and the corresponding monodromy graph $\Gamma$ from construction \ref{const-monodromy}, each vertex $\sigma_i$ of the walk corresponds to a vertical cross-section of $\Gamma$, and furthermore the cycles of $\sigma_i$ correspond to the edges of $\Gamma$ which intersect the vertical cross-section.
The edges of $\Gamma$ can be of different types, namely:
\begin{enumerate}
  \item odd normal edges,
  \item even normal edges,
  \item even dashed edges, or
  \item paired up bold edges.
\end{enumerate}
The cycles of $\sigma_i$ are given by the connected components of the union of matchings $m(\sigma'_i) \cup m(\gamma)$, where $\sigma'_i = \sigma_i \circ \gamma$, which can be of different types, namely:
\begin{enumerate}
  \item alternating paths with as many edges from $m(\sigma'_i)$ as $m(\gamma)$,
  \item alternating paths with one less edge from $m(\sigma'_i)$ than $m(\gamma)$,
  \item alternating paths with one more edge from $m(\sigma'_i)$ than $m(\gamma)$, or
  \item alternating cycles with as many edges from $m(\sigma'_i)$ as $m(\gamma)$,
\end{enumerate}
as illustrated in the figure below (where blue edges are from $m(\sigma'_i)$ and red edges are from $m(\gamma)$). The types for edges of $\Gamma$ correspond exactly to the types for connected components of $m(\sigma'_i) \cup m(\gamma)$.
\[
  \includegraphics{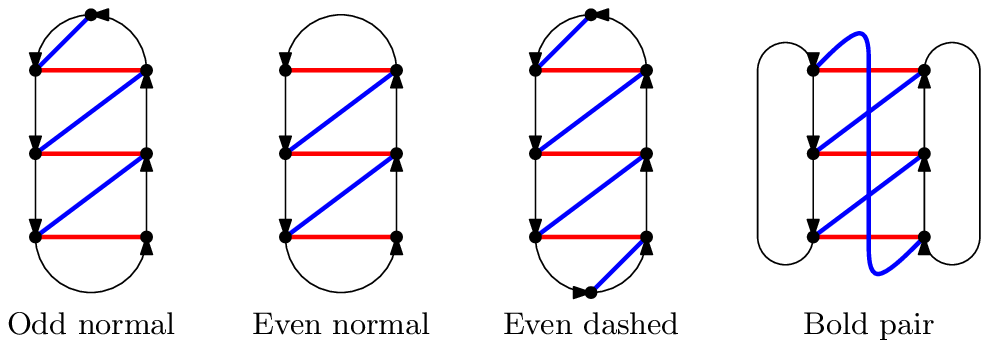}
\]
\end{theorem}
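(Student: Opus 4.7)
The plan is to prove the two halves of the theorem separately. First I would establish the ``cross-section'' claim, then analyse each of the four edge types in turn using the classification from Lemma \ref{lem-gamma}.

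\textbf{Cross-sections.} Recall from Construction \ref{const-monodromy} that $\Gamma$ is built by starting with $\ell(\mu)$ left ends whose weights are the cycle lengths of $\sigma_0=\sigma$, and then, at the vertex labelled~$i$, modifying the current cycle structure according to whether $\tau_i$ cuts a cycle of the running permutation $\sigma_i=\tau_i\circ\cdots\circ\tau_1\circ\sigma$ into two or joins two cycles of $\sigma_{i-1}$ into one. A vertical cross-section of $\Gamma$ placed immediately after vertex $i$ therefore intersects precisely those edges of $\Gamma$ which record the cycles of $\sigma_i$, and the edge weights agree with the cycle lengths. This part is essentially bookkeeping.

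\textbf{Matching the types.} Now fix a cross-section and a cycle $c$ of $\sigma_i$ (or a pair of cycles exchanged by $\gamma$) corresponding to an edge $e$ (or pair of edges) of $\Gamma$. Since $\sigma'_i = \sigma_i \circ \gamma$ and $\gamma^2=\id$, we have $\sigma_i = \sigma'_i \circ \gamma$, so $\sigma_i$ is a product of two involutions and its orbits are exactly the connected components of $m(\sigma'_i)\cup m(\gamma)$. Thus each such component occupies the support of either one cycle of $\sigma_i$ or two cycles of $\sigma_i$ of equal length which are exchanged by $\gamma$. I would then run through the four possibilities of Lemma \ref{lem-gamma}:
\begin{enumerate}
\item\emph{(Odd normal).} The cycle $c$ has odd length $k$ and $\gamma$ fixes exactly one element of its support. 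Writing $c=(1\;2\;\ldots\;k)$ with $\gamma$ the reflection across the fixed element, a direct computation shows $\sigma'_i\!\restriction\!_{\operatorname{supp}(c)}$ is an involution with one fixed point and $(k-1)/2$ transpositions, producing $(k-1)/2$ edges in $m(\sigma'_i)$ and $(k-1)/2$ edges in $m(\gamma)$ which interlace to form an alternating path on $k$ vertices with equal numbers of edges from each matching.
\item\emph{(Even normal).} The cycle $c$ has even length $k$ and $\gamma$ has no fixed point in $\operatorname{supp}(c)$, with two pairs of exchanged $\sigma_i$-neighbours. Here $\gamma$ contributes $k/2$ edges to $m(\gamma)$ while $\sigma'_i$ turns out to have two fixed points and $k/2-1$ transpositions (the two exchanged neighbour pairs become fixed points of $\sigma'_i$), producing an alternating path on $k$ vertices with one fewer $\sigma'_i$-edge than $\gamma$-edge.
\item\emph{(Even dashed).} Dually, if $\gamma$ has two fixed points at distance $k/2$ in $c$, then $\sigma'_i$ has $k/2$ transpositions in the support while $m(\gamma)$ has only $k/2-1$, giving an alternating path with one extra $\sigma'_i$-edge.
\item\emph{(Paired bold).} If $\gamma$ exchanges two $k$-cycles, then every element of their supports appears in $m(\gamma)$ and in $m(\sigma'_i)$, and the union of matchings closes up into a single alternating cycle of length $2k$ with equal numbers of edges from each matching.
\end{enumerate}
In each case I would verify the cycle/path structure by a single explicit calculation using the normalised form of $\gamma$ (as in the proof of Lemma \ref{lem-vertexmult}).

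\textbf{Main obstacle.} The genuine content is not the cross-section statement but the bijection between the four edge types and the four component types; the careful bookkeeping of which elements of $\operatorname{supp}(c)$ are $\sigma'_i$-fixed, which are $\gamma$-fixed, and how the alternation ``closes up'' (path vs.\ cycle) in the paired-bold case is where one has to be most careful. Once the four local models are computed the global statement follows by taking disjoint unions of supports, since distinct cycles of $\sigma_i$ (or distinct $\gamma$-orbits of such cycles) have disjoint supports and therefore contribute disjoint connected components to $m(\sigma'_i)\cup m(\gamma)$.
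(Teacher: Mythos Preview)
Your approach is correct and reaches the same conclusion as the paper, but the two arguments are organised in opposite directions. The paper starts from the component side: it first observes that the $\gamma$-fixed points are exactly the vertices of $m(\sigma'_i)\cup m(\gamma)$ not incident to a $\gamma$-edge, hence are endpoints of alternating paths, and then for each of the four component shapes reads off the number of $\gamma$-fixed points and the number of $\sigma_i$-cycles supported on that component. Your proof instead starts from the edge side, invoking Lemma~\ref{lem-gamma} to normalise $c$ and $\gamma$ on the support and then computing $\sigma'_i=\sigma_i\circ\gamma$ explicitly to identify the component. The paper's argument is slightly more conceptual (one global observation about $\gamma$-fixed points replaces four normalised computations); yours is more concrete and makes the interlacing visible. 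Either is fine.

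Two small points of wording to fix. First, your sentence ``its orbits are exactly the connected components of $m(\sigma'_i)\cup m(\gamma)$'' is not literally true: the orbits of the \emph{group} $\langle\sigma'_i,\gamma\rangle$ are the components, but a single component of alternating-cycle type supports \emph{two} $\sigma_i$-cycles (which is exactly what you say in the next sentence). Second, in case~(b) it is not quite that ``the two exchanged neighbour pairs become fixed points of $\sigma'_i$'': rather, one element from each exchanged pair becomes a $\sigma'_i$-fixed point, giving the two fixed points you need. Neither of these affects the correctness of the argument.
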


\begin{proof}
  We have already shown in construction \ref{const-monodromy} that the stated correspondence between edges of $\Gamma$ and cycles of $\sigma_i$ holds, so the content of this theorem is the relation between types of edges of $\Gamma$ and types of connected components of $m(\sigma'_i) \cup m(\gamma)$.

  Note that the $\gamma$-fixed points are the isolated vertices of $m(\gamma)$, so they are vertices of degree less than 2 in the union $m(\sigma'_i) \cup m(\gamma)$. In particular, they are endpoints of alternating paths in $m(\sigma'_i) \cup m(\gamma)$.

  The type of an edge of $\Gamma$ is related to the action of the involution $\gamma$ on the corresponding cycle of $\sigma_i$, and we can read off the action of $\gamma$ on the cycles of $\sigma_i$ by looking at the connected components of $m(\sigma'_i) \cup m(\gamma)$, as follows:
  \begin{enumerate}
    \item
      If the component is an alternating path with as many edges from $m(\sigma'_i)$ as $m(\gamma)$, then it has an odd number of vertices, and it has one endpoint which is not incident to an edge of $m(\gamma)$, which is a $\gamma$-fixed point. Also, its vertices form a single cycle of $\sigma_i$, so it corresponds to an odd normal edge of $\Gamma$.

    \item
      If the component is an alternating path with one less edge from $m(\sigma'_i)$ than $m(\gamma)$, then it has an even number of vertices, and both of its endpoints are incident to edges of $m(\gamma)$, so it has no $\gamma$-fixed points. Also, its vertices form a single cycle of $\sigma_i$, so it corresponds to an even normal edge of $\Gamma$.

    \item
      If the component is an alternating path with one more edge from $m(\sigma'_i)$ than $m(\gamma)$, then it has an even number of vertices, and neither of its endpoints is incident to an edge of $m(\gamma)$, so it has two $\gamma$-fixed points. Also, its vertices form a single cycle of $\sigma_i$, so it corresponds to an even dashed edge of $\Gamma$.

    \item
      If the component is an alternating cycle, then it has an even number of vertices, which form two interleaved cycles of $\sigma_i$ of the same length. These cycles are switched by the action of $\gamma$, so they correspond to a pair of bold edges of $\Gamma$.
  \end{enumerate}

  This covers all the possible connected components of $m(\sigma'_i) \cup m(\gamma)$.
\end{proof}

\begin{remark}
From this, it is clear that each step
\[
  \sigma_{i-1} \xrightarrow{\tau_i} \sigma_i
\]
in the non-translated Cayley walk is a cut (resp.~a join) if and only if the step
\[
  \sigma'_{i-1} \xrightarrow{\tau_i} \sigma'_i
\]
in the translated walk is a cut (resp.~a join), unless a pair of bold edges is involved, in which case it is a join (resp.~a cut).
Thus, the matchings approach gives another simple explanation of the vertex types appearing in lemma \ref{lem-vertexmult}. At each vertex in the monodromy graph, the matching $m(\sigma'_{i-1})$ changes into the matching $m(\sigma'_i)$ by adding or removing an edge. The effect of this change on the alternating paths and alternating cycles (a--d) is exactly captured by the 8 vertex types of lemma \ref{lem-vertexmult}. For example, starting with an alternating path of type (b) and removing one $m(\sigma'_{i-1})$-edge, we get two paths of type (a).
\end{remark}

\bibliographystyle{plain}
\bibliography{bibliographie}
\end {document}